\newcommand{\ensemblenombre}[1]{\ensuremath{\mathbb{#1}}}
\newcommand{\Z}{\ensemblenombre{Z}}
\newcommand{\R}{\ensemblenombre{R}}
\newcommand{\C}{\ensemblenombre{C}}
\newcommand{\abs}[1]{\ensuremath{\left\lvert#1\right\rvert}}
\newcommand{\norme}[1]{\ensuremath{\left\lVert#1\right\rVert}}
\newcommand{\enstq}[2]{\ensuremath{\left\{#1\mathrel{}\middle|\mathrel{}#2\right\}}}
\newcommand{\transp}[1]{\prescript{t}{}{#1}}
\newcommand{\restreinta}{\ensuremath{\mathclose{}|\mathopen{}}}
\newcommand{\piun}[1]{\pi_1({#1})}
\newcommand{\RP}[1]{\ensuremath{\R\mathbf{P}^{#1}}}
\newcommand{\Sn}[1]{\mathbf{S}^{#1}}
\newcommand{\Diff}[2]{\mathopen{}\mathrm{D}_{#1}#2}
\newcommand{\Fitan}[1]{\ensuremath{\mathrm{T}#1}}
\newcommand{\Fiunitan}[1]{\ensuremath{{\mathrm{T}}^1{#1}}}
\newcommand{\PGL}[1]{\ensuremath{\mathrm{PGL}_{#1}\mathopen{(}\R\mathclose{)}}}
\newcommand{\slR}[1]{\ensuremath{\mathfrak{sl}_{#1}}}
\newcommand{\Heis}[1]{\ensuremath{\mathrm{Heis}\mathopen{(}#1\mathclose{)}}}
\newcommand{\PBio}[2]{\ensuremath{\mathrm{PO}\mathopen{(}#1\mathpunct,#2\mathclose{)}}}
\newcommand{\PBiu}[2]{\ensuremath{\mathrm{PU}\mathopen{(}#1\mathpunct,#2\mathclose{)}}}
\newcommand{\X}{\mathbf{X}}
\newcommand{\Lm}{\mathcal{L}}
\newcommand{\Calpha}{\mathcal{C}_\alpha}
\newcommand{\Cbeta}{\mathcal{C}_\beta}
\newcommand{\Falpha}{\mathcal{F}_\alpha}
\newcommand{\Fbeta}{\mathcal{F}_\beta}
\newcommand{\Bplus}{B_{\alpha\beta}^+}
\newcommand{\Bmoins}{B_{\alpha\beta}^-}
\newcommand{\Balphabeta}{B_{\alpha\beta}}
\newcommand{\Pmin}{\mathbf{P}_{min}}
\DeclareMathOperator{\Image}{Im}
\DeclareMathOperator{\Int}{Int}
\DeclareMathOperator{\tr}{tr}
\DeclareMathOperator{\id}{id}
\DeclareMathOperator{\Diag}{Diag}
\numberwithin{equation}{section}
\numberwithin{figure}{section}
\newcommand{\getchbarstartendpages}[1]{%
    % #1 = change bar id,
    \edef\chbarstartmarkid{\csname save@pt@chbar-#1-start\endcsname}%
    \edef\chbarstartpage{\csname save@pg@\chbarstartmarkid\endcsname}%
    \edef\chbarendmarkid{\csname save@pt@chbar-#1-end\endcsname}%
    \edef\chbarendpage{\csname save@pg@\chbarendmarkid\endcsname}%
}
\newcommand{\chbarifsplit}[3]{%
    % #1 = change bar id,
    % #2 = not split, #3 = split
    \getchbarstartendpages{#1}%
    \ifnum\chbarstartpage=\chbarendpage\relax%
        {#3}%
    \else%
        {#2}%
    \fi%
}
\newcounter{changebar}
\theoremstyle{definition}
\newtheorem{definition}{Definition}[section]
\theoremstyle{plain}
\newtheorem{theorem}[definition]{Theorem}
\newtheorem{theoremintro}{Theorem}
\newtheorem{corollary}[definition]{Corollary}
\newtheorem{corollaryintro}[theoremintro]{Corollary}
\newtheorem{lemma}[definition]{Lemma}
\newtheorem{proposition}[definition]{Proposition}
\theoremstyle{remark}
\newtheorem{example}[definition]{Example}
\newtheorem{remark}[definition]{Remark}
\title[Geometric surgeries of three-dimensional flag structures]
{Geometric surgeries of three-dimensional flag structures and
non-uniformizable examples}
\author{Elisha Falbel and Martin Mion-Mouton}
\date{\today}
\begin{document}
\address{
Martin Mion-Mouton,
Institut de Mathématiques de Marseille (I2M), UMR 7373
}
\email{\href{mailto:martin.mion-mouton@univ-amu.fr}{martin.mion-mouton@univ-amu.fr}}
\urladdr{\url{https://orcid.org/0000-0002-8814-0918}}
\address{
Institut de Math\'ematiques de Jussieu-Paris Rive Gauche,
CNRS UMR 7586 and INRIA EPI-OURAGAN
}
\email{\href{mailto:elisha.falbel@imj-prg.fr}{elisha.falbel@imj-prg.fr}}
\urladdr{\url{https://webusers.imj-prg.fr/~elisha.falbel/}}

\subjclass[2020]{57M50, 57N16}
\keywords{Flag structures, Higher-rank geometric structures,
$(G,X)$-structures, Surgery, Uniformization.}

\begin{abstract}
In this paper, we introduce a notion of geometric surgery
for flag structures,
which are geometric structures locally modelled on the three-dimensional flag space
under the action of $\mathrm{PGL}_3(\mathbb{R})$.
Using such surgeries we provide examples of flag structures,
of both uniformizable and non-uniformizable type.
\end{abstract}

\maketitle

\section{Introduction}
A fundamental problem in the study of any
locally homogeneous geometric structure
is simply to construct examples of such
structures, and a basic way to produce new examples is to combine
formerly known ones.
With this goal in mind,
an easy way to topologically combine two manifolds
is to form their connected sum,
which raises then the natural question wether the
connected sum of two geometric manifolds
can be endowed with a geometric structure combining the ones of
the two pieces.
In the case of
flat conformal Riemannian manifolds for instance,
locally modelled on the round sphere $\Sn{n}$ with the conformal action of
$\PBio{1}{n+1}$,
or in the one of spherical CR-manifolds modelled on
$\partial\mathbf{H}^n_{\C}$ with the CR action of $\PBiu{1}{n}$,
previous works established such \emph{geometric connected sums}
(see \cite{kulkarni_principle_1978} for the former
and \cite{Burns-Shnider, falbel_non-embeddable_1992} for the latter).
\par However, both of these structures
share the important common feature that they are modelled on
\emph{rank one} simple Lie groups.
Our goal in this paper is to introduce
such a notion of \emph{geometric surgery}
for three-dimensional \emph{flag structures}, which are one of the simplest higher-rank geometries
and are modelled on the three-dimensional flag space under the action of $\PGL{3}$.
Flag structure surgeries will involve  gluings along genus two surfaces.
We then initiate the study of these surgeries and use them to produce
new examples of flag structures.

\subsection{Flag structures in dimension three}
\label{subsection-flagstructuresdimensionthree}
 We will be interested in this paper in a three-dimensional homogeneous space
of the Lie group $\PGL{3}$.
Denoting by $\RP{2}_*$ the space
of projective lines of $\RP{2}$,
the \emph{flag space} is the
set
\begin{equation}\label{equation-flagspace}
 \X=\enstq{(p,D)}{p\in D}\subset\RP{2}\times\RP{2}_*
\end{equation}
of pointed projective lines of $\RP{2}$.
This is a closed orientable three-manifold,
endowed with a transitive diagonal (projective) action of $\PGL{3}$.
We will study in this paper three-manifolds that are
locally modelled on $\X$. 
We will also call \emph{flag structure} a $(\PGL{3},\X)$-structure (see Definition \ref{definition-flagstructure}),
and \emph{flag manifold} a three-manifold endowed with a flag structure.
\par Flag structures can also be defined as \emph{flat path structures},
a path structure on a three-manifold being a pair $(E^\alpha,E^\beta)$
of transverse rank one distributions
in the tangent bundle whose sum $E^\alpha\oplus E^\beta$ is a contact distribution
(we will give in  subsection \ref{subsection-flagstructures} below
more details about this interpretation).
Although any orientable closed three-manifold can be given a path structure,
it is a basic and open problem to decide which
ones bear a flag structure.
For instance, while a flag structure was constructed
by the first author and Thebaldi
on a
(finite-volume and complete) non-compact hyperbolic three-manifold in \cite{falbel},
it is not known whether a flag structure exists or not on a \emph{closed}
hyperbolic three-manifold.

\subsection{A rank-two surgery}\label{subsection-ranktwo}
\par In the rank-one flat conformal or CR-spherical geometries,
it is the existence of North-South dynamics for
the action of any generic element
which ultimately allows one to endow the connected sum of two
geometric manifolds
with a compatible structure.
On the other hand, in the dynamics of the rank two group $\PGL{3}$ on the
flag space $\X$,
the  attracting and repelling points
of North-South dynamics are replaced by geometric one-dimensional objects
that we call \emph{$\alpha-\beta$ bouquet of circles},
described as follows.
\par The flag space $\X$ enjoys two natural
$\PGL{3}$-equivariant circle-bundle projections
$\pi_\alpha$ and $\pi_\beta$,
which are the respective first and second coordinate projections
onto $\RP{2}$ and $\RP{2}_*$, and whose
fibers define two $\PGL{3}$-invariant
transverse foliations of $\X$ by circles, respectively called
\emph{$\alpha$} and \emph{$\beta$-circles} $\Calpha(x)$ and $\Cbeta(x)$.
These foliations being $\PGL{3}$-invariant they define on any flag manifold $M$
two transverse one-dimensional foliations $\Falpha$ and $\Fbeta$, and
through any point $x$ in $\X$ passes thus a 
\emph{$\alpha-\beta$ bouquet}
$\Balphabeta(x)=\Calpha(x)\cup\Cbeta(x)$ of two circles,
which are the attracting sets of loxodromic elements
of $\PGL{3}$ acting on $\X$ as we will
see in subsection \ref{soussection-dynamiqueX}.
\par In order to obtain a surgery we need our manifolds to contain full
$\alpha-\beta$ bouquet of circles.
We define then a
\emph{flag surgery along $\alpha-\beta$ bouquets of circles}
properly introduced  in Definition
\ref{definition-connectedsum}.
The gluing being made along the boundary of a tubular neighbourhood
of the attracting set,
and a neighbourhood of a bouquet of two circles being a genus two handlebody,
the flag surgery is a geometric realization of the gluing
of two three-manifolds along two genus two handlebodies
(see  subsection \ref{subsection-definitionexemples}
for more details).
 Note that a similar phenomenon already appeared
in \cite{francesLorentzianKleinianGroups2004}
where the case of conformal Lorentzian geometry was investigated
-- another rank two geometry.
In this case, the geometric surgeries were however not defined
by gluing along genus two handlebodies, but along
solid tori.
The following result providing flag surgeries
is proved in
section \ref{subsection-preuvetheoremconnectedsum}.
\begin{theoremintro}\label{theoremintro-sommeconnexe}
Let $M$ and $N$ be two flag manifolds,
and $B_M\subset M$, $B_N\subset N$ be two $\alpha-\beta$ bouquets of circles
admitting open neighbourhoods flag isomorphic to open subsets of $\X$.
There exists then a flag surgery of $M$ and $N$ along $B_M$ and $B_N$.
\end{theoremintro}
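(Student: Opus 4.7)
The plan is to realize the surgery inside the model space $\X$ by means of a Schottky-type identification provided by a loxodromic element of $\PGL{3}$. Pulling back via the flag isomorphisms $\iota_M : U_M \to V_M$ and $\iota_N : U_N \to V_N$, the bouquets $B_M$ and $B_N$ correspond to two $\alpha$-$\beta$ bouquets $\iota_M(B_M)$ and $\iota_N(B_N)$ in $\X$. Using the fact that $\PGL{3}$ acts transitively on the pairs of bouquets in general position that occur as the attracting and repelling sets of a common loxodromic element, I would first post-compose $\iota_M$ and $\iota_N$ with suitable projective transformations (which remain flag isomorphisms) to reduce to the case $\iota_M(B_M) = B_+$ and $\iota_N(B_N) = B_-$, where $B_+$ and $B_-$ are the attracting and repelling bouquets of a fixed loxodromic $g \in \PGL{3}$.

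Next I would exploit the dynamics of $g$ to build matching tubular neighborhoods. Fix a small open tubular neighborhood $W_- \subset V_N$ of $B_-$ whose closure contains no point lying outside the basin of attraction of $B_+$ for $g$. The positive iterates $g^n(\X \setminus \overline{W_-})$ then form a nested decreasing family of open sets whose intersection is $B_+$, so for $n$ large enough the set $W_+ := g^n(\X \setminus \overline{W_-})$ is an arbitrarily small open neighborhood of $B_+$, which we may arrange to lie inside $V_M$. Setting $T_M := \iota_M^{-1}(\overline{W_+})$ and $T_N := \iota_N^{-1}(\overline{W_-})$ yields compact regular genus-two handlebody neighborhoods of $B_M$ and $B_N$, and by construction $g^n$ restricts to a flag diffeomorphism $\partial W_- \to \partial W_+$ which swaps the two sides of these surfaces, sending the exterior of $W_-$ in $\X$ onto the interior of $W_+$. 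This provides a gluing map $\phi := \iota_M^{-1} \circ g^n \circ \iota_N : \partial T_N \to \partial T_M$.

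The flag surgery is then the closed topological three-manifold obtained from the disjoint union $(M \setminus \mathrm{int}(T_M)) \sqcup (N \setminus \mathrm{int}(T_N))$ by identifying each $y \in \partial T_N$ with $\phi(y) \in \partial T_M$. To equip it with a flag structure I would keep the atlases of $M$ and $N$ on the interiors of the two pieces, and on a two-sided collar neighborhood of the glued locus use the chart $\iota_M$ on the $M$-side and $g^n \circ \iota_N$ on the $N$-side. These agree along the glued boundary by the definition of $\phi$, and their transitions with the original charts of $N$ are restrictions of $g^n \in \PGL{3}$, hence projective. This produces a well-defined flag atlas on the glued manifold.

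The main technical difficulty lies in the second step, namely in the dynamical analysis of a generic loxodromic element of $\PGL{3}$ acting on $\X$: one must describe the stable and unstable manifolds precisely enough to locate $B_{\pm}$ as the relevant attracting and repelling sets, and to establish the existence of a small tubular neighborhood $W_-$ of $B_-$ whose complement lies entirely in the basin of $B_+$. The same understanding is needed to justify the transitivity of $\PGL{3}$ on generic pairs of bouquets used in the first reduction. These ingredients should follow from the description of loxodromic dynamics given in paragraph \ref{soussection-dynamiqueX}.
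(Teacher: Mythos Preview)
Your approach is correct and reaches the goal, but by a genuinely different route from the paper's. The paper builds the surgery through the \emph{anti-flag involution} $\kappa\colon(p,D)\mapsto(D^\bot,p^\bot)$ of $\X$ (see paragraph~\ref{subsection-dualinvolution}): both $B_M$ and $B_N$ are sent, after shrinking and post-composing by elements of $\PGL{3}$, to a \emph{single} bouquet $B$, and Lemma~\ref{lemma-echangecorpsenanses} produces nested handlebodies $H_1\subset H_2$ around $B$ together with a conjugate $\varphi=g^{-1}\kappa g$ that preserves the collar $H_2\setminus\Int(H_1)$ while exchanging its two boundary surfaces. The gluing identification is then $\phi_N^{-1}\circ(\transp{g}g)\circ\phi_M$, which lands in $\PGL{3}$ thanks to the $\Theta$-equivariance of $\kappa$. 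You instead send $B_M$ and $B_N$ to the two \emph{distinct} bouquets $B_+$ and $B_-$ of a loxodromic $g$, and use a high iterate $g^n$ directly as the side-swapping map: the required inversion of inside and outside comes from the dynamical fact that $g^n$ carries $\X\setminus\overline{W_-}$ onto $W_+$, rather than from an involution.

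Your argument is more elementary in that it bypasses $\kappa$ entirely and rests only on Lemma~\ref{lemma-bouquetattractifrepulsif}; the fact that $\overline{W_+}=g^n(\X\setminus W_-)$ is again a genus-two handlebody is available from paragraph~\ref{soussection-Schottkyungenerateur}. Two minor imprecisions worth cleaning up: the transitivity you invoke on \emph{pairs} of bouquets is stronger than needed, since $\iota_M$ and $\iota_N$ may be post-composed by independent elements of $\PGL{3}$ and transitivity on single flags already suffices; and the sets $g^n(\X\setminus\overline{W_-})$ need not literally be nested (this would require $g^{-1}(\overline{W_-})\subset\overline{W_-}$), though only their convergence to $B_+$ is actually used. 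What the paper's route buys is that the involutive structure of the gluing is reused in the ping-pong construction of Theorem~\ref{theorem-Kleinian-connectedsum}, so introducing $\kappa$ also sets up the Kleinian combination result; it is moreover the deliberate analogue of the sphere inversions used in the conformal and CR settings.
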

 It was established in \cite{kulkarni_principle_1978},
that there exits a conformal structure 
 on a connected sum
of conformal structures.   This fact was systematically used
in \cite{kulkarni_uniformization_1986}
to construct examples
enjoying different geometrical properties.
The authors show for instance
that connected sums
of Kleinian Möbius structures are Kleinian,
but they also use these geometric connected sums
to obtain examples of non-Kleinian structures.
Inspired by \cite{kulkarni_uniformization_1986},
one of our main motivations for
Theorem \ref{theoremintro-sommeconnexe}
is to provide new examples
of flag structures
both Kleinian and non-Kleinian.

\subsection{Combination of Kleinian flag manifolds by surgery}\label{subsection-mainresults}
\par For any discrete subgroup $\Gamma\subset\PGL{3}$
acting properly discontinuously on an
open subset $\Omega\subset\X$,
the quotient $\Gamma\backslash\Omega$ bears
a canonical flag structure
and such flag structures are called \emph{Kleinian}.
Kleinian examples will be produced by surgery through
the following result
proved in subsection \ref{subsubsection-Kleinianexamples}
(see Theorem \ref{theorem-Kleinian-connectedsum}),
following an argument initially proved in
\cite[\S 5.6]{kulkarni_uniformization_1986} for conformal structures.
\begin{theoremintro}\label{theoremintro-Kleinianconnectedsum}
A flag surgery of  Kleinian flag manifolds is
a  Kleinian flag structure.
 \end{theoremintro}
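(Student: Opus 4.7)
The plan is to realize the surgered manifold explicitly as a Kleinian quotient $\Gamma\backslash\Omega$, following the combination-theorem philosophy used by Kulkarni in the conformal case. Write $M=\Gamma_M\backslash\Omega_M$ and $N=\Gamma_N\backslash\Omega_N$ with $\Omega_M,\Omega_N\subset\X$ open and $\Gamma_M,\Gamma_N\subset\PGL{3}$ discrete. The assumption that each bouquet $B_M\subset M$, $B_N\subset N$ admits an open neighbourhood flag isomorphic to an open subset of $\X$ implies, by standard properties of developing maps, that such a neighbourhood lifts injectively into $\Omega_M$ (resp. $\Omega_N$), providing genuine $\alpha$-$\beta$ bouquets $\tilde B_M\subset\Omega_M$ and $\tilde B_N\subset\Omega_N$ inside $\X$. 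Since $\PGL{3}$ acts transitively on $\alpha$-$\beta$ bouquets, after conjugating $\Gamma_N$ by a suitable element of $\PGL{3}$ sending $\tilde B_N$ to $\tilde B_M$, I may assume $\tilde B_M=\tilde B_N=:B$. I would then fix a genus two handlebody neighbourhood $V\subset\X$ of $B$ with boundary $\Sigma=\partial V$, chosen small enough that $V$ embeds under the covering projections into both $M$ and $N$.

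The candidate holonomy group is $\Gamma=\langle\Gamma_M,\Gamma_N\rangle\subset\PGL{3}$, and the candidate domain is
\begin{equation*}
 \Omega=\bigcup_{\gamma\in\Gamma}\gamma\cdot\bigl((\Omega_M\setminus\overline V)\cup(\Omega_N\cap V)\bigr).
\end{equation*}
The core of the proof consists in establishing three interdependent facts: (i) the group $\Gamma$ is discrete and isomorphic to the free product $\Gamma_M\ast\Gamma_N$; (ii) the action of $\Gamma$ on $\Omega$ is properly discontinuous; and (iii) the quotient $\Gamma\backslash\Omega$ is flag isomorphic to the surgered manifold produced by Theorem~\ref{theoremintro-sommeconnexe}.

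The main obstacle lies in (i) and (ii), which I would prove simultaneously through a ping-pong argument based on the pair of complementary regions $\Omega_M\setminus\overline V$ and $V\cap\Omega_N$. Because $V$ embeds into $M$, any nontrivial element of $\Gamma_M$ sends $V$ off into $\Omega_M\setminus\overline V$; symmetrically, because $\X\setminus\overline V$ embeds into $N$ under the relevant projection, any nontrivial element of $\Gamma_N$ sends $\X\setminus\overline V$ into $V\cap\Omega_N$. An induction on the length of a reduced word in $\Gamma_M\ast\Gamma_N$ then shows that any such word alternately pushes a test point across $\Sigma$, forbidding the accumulation of the identity and forcing the claimed free-product structure, while simultaneously showing that $\Gamma$-translates of a compact subset of $\Omega$ can accumulate only on points outside $\Omega$. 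Once discreteness and proper discontinuity are in hand, step (iii) reduces to unfolding the gluing construction of Theorem~\ref{theoremintro-sommeconnexe} and checking that the flag charts coming from $\Omega_M$ on the $M$-side of $\Sigma$ and from $\Omega_N$ on the $N$-side are already compatible in $\X$ by the chosen conjugation, so that the tautological map $\Omega\to(\Gamma_M\backslash\Omega_M)\,\sharp\,(\Gamma_N\backslash\Omega_N)$ descends to a flag isomorphism.
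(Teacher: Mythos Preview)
Your ping-pong setup contains a genuine gap. After aligning the two bouquets to a common $B$ and choosing a small handlebody $V$ around it, you correctly note that $V$ embeds in $M$, so every nontrivial element of $\Gamma_M$ sends $\overline V$ off itself. But the ``symmetric'' claim for $\Gamma_N$ is false: the hypothesis gives you only that $V$ embeds in $N$ as well, hence nontrivial elements of $\Gamma_N$ \emph{also} send $\overline V$ off itself. There is no reason whatsoever for $\X\setminus\overline V$ to embed into $N$, and nontrivial $\gamma\in\Gamma_N$ do not send $\X\setminus\overline V$ into $V$. Both groups push $V$ outward in the same direction, so no nesting of ping-pong domains occurs and nothing forces $\langle\Gamma_M,\Gamma_N\rangle$ to be a free product or even discrete (already $\Gamma_M=\Gamma_N$ kills claim~(i)).

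The missing ingredient is precisely the anti-flag involution $\varphi=g^{-1}\kappa g$ used to build the surgery in paragraph~\ref{subsection-preuvetheoremconnectedsum}: it exchanges a small handlebody with the closure of its complement. The paper does not conjugate $\Gamma_2$ by an element of $\PGL{3}$ aligning the bouquets, but by the anti-flag map $g^{-1}\varphi_2$, obtaining $\Gamma_2'=(g^{-1}\varphi_2)\Gamma_2(g^{-1}\varphi_2)^{-1}\subset\PGL{3}$; this conjugated group now genuinely sends $\X\setminus\overline{U_1}$ into $U_1$, and the ping-pong between $\Gamma_1$ and $\Gamma_2'$ goes through. The domain of discontinuity is then built recursively as a tree of nested copies of the $\Omega_i'$ inside successively smaller handlebodies, not as a single $\Gamma$-orbit of a fixed set. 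This inversion is also what makes the quotient match the surgery of Definition~\ref{definition-connectedsum}, whose transition map $\phi_N^{-1}\circ\transp{g}g\circ\phi_M$ already encodes $\kappa$; the paper stresses immediately after the proof that the use of anti-flag involutions is crucial here.
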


Actually,
most of the known Kleinian flag structures
arise from \emph{Anosov representations},
forming an important and deeply studied class of examples.
Such representations $\rho\colon\Gamma\to\PGL{3}$
of hyperbolic groups $\Gamma$ into $\PGL{3}$
admit a $\rho(\Gamma)$-invariant
open subset $\Omega_\rho\subset\X$ with a proper,
discontinuous and cocompact
action of $\rho(\Gamma)$,
and yield thus a closed
Kleinian flag manifold $\rho(\Gamma)\backslash\Omega_\rho$.
We refer to \cite{barbot_flag_2001,barbot_three-dimensional_2010} for the first examples
of flag Anosov representations (having actually appeared before the
introduction of Anosov representations themselves)
and to \cite{guichard_anosov_2012,kapovich_dynamics_2018} for the general theory,
providing domains of discontinuity for Anosov subgroups
in a more general setting.
 In the recent preprint
\cite{nolteConcaveFoliatedFlag2024}, three-dimensional
flag structures with a Hitchin holonomy
are studied in link with some specific distinguished
foliations.

A rich class of  examples  are provided
by \emph{Schottky flag manifolds},
obtained from free Anosov subgroups of $\PGL{3}$
which are Schottky in a natural sense
defined in \cite[\S 1.3.1]{mion-mouton_geometrical_2022}.
Building up on the latter,
Theorem \ref{theoremintro-Kleinianconnectedsum}
yields the following examples of Kleinian flag manifolds
(see Corollary \ref{corollary-newKleinianexamples}).
\begin{corollaryintro}\label{corollaryintro-newKleinian}
 Let $M$ and $N$ be two Schottky flag manifolds,
 and $B_M$, $B_N$ be two $\alpha-\beta$ bouquets of circles
 contained in the respective fundamental domains of $M$ and $N$.
There exists then a flag surgery of $M$ and $N$
 along $B_M$ and $B_N$,
 which is a Kleinian flag manifold.
\end{corollaryintro}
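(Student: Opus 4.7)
The plan is to combine Theorems \ref{theoremintro-sommeconnexe} and \ref{theoremintro-Kleinianconnectedsum} in a straightforward way, the only nontrivial point being the verification of the hypothesis of Theorem \ref{theoremintro-sommeconnexe} from the Schottky setting. Write $M=\Gamma_M\backslash\Omega_M$ and $N=\Gamma_N\backslash\Omega_N$ for the realizations of the two Schottky flag manifolds as quotients of $\Gamma$-invariant open subsets of $\X$, and denote by $\pi_M\colon\Omega_M\to M$ and $\pi_N\colon\Omega_N\to N$ the corresponding covering maps. Being Schottky, $M$ and $N$ are in particular Kleinian.

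First I would produce open neighbourhoods of $B_M$ and $B_N$ in $M$ and $N$ that are flag-isomorphic to open subsets of $\X$. Let $F_M\subset\Omega_M$ be the fundamental domain of the Schottky action containing the lift of $B_M$. By definition of a fundamental domain, the restriction of $\pi_M$ to the interior of $F_M$ is injective; since $\pi_M$ is a local flag-isomorphism (as $M$ is Kleinian), its restriction to any open subset of the interior of $F_M$ on which it is injective is a flag-isomorphism onto its image, which is open in $M$ and flag-isomorphic to an open subset of $\Omega_M\subset\X$. Up to shrinking the bouquets slightly if needed, I may assume $B_M$ lies in the interior of $F_M$, and thus admits an open neighbourhood $U_M\subset M$ flag-isomorphic to an open subset of $\X$. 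Applying the same argument to $N$ yields a corresponding neighbourhood $U_N$ of $B_N$.

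With the hypothesis of Theorem \ref{theoremintro-sommeconnexe} verified, this theorem produces a flag surgery of $M$ and $N$ above $B_M$ and $B_N$. Finally, since $M$ and $N$ are Kleinian, Theorem \ref{theoremintro-Kleinianconnectedsum} implies that this surgery is itself a Kleinian flag manifold, which concludes the proof.

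The only delicate point I anticipate is the requirement that $B_M$ and $B_N$ lie in a part of the fundamental domain where the covering map is genuinely injective on an open neighbourhood containing the whole bouquet (not just its interior); this is where the precise meaning of \emph{contained in the fundamental domain} in the statement must be pinned down, most likely by using the Schottky structure described in \cite[\S 1.3.1]{mion-mouton_geometrical_2022} to ensure that the fundamental domain can be taken with smooth boundary disjoint from the prescribed bouquets.
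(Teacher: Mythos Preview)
Your proposal is correct and follows essentially the same route as the paper: the paper packages your first paragraph as Proposition~\ref{proposition-bouquetsSchottky} (the projection restricted to the fundamental domain is a flag embedding, so any bouquet contained therein has a neighbourhood embedding in $\X$), and then concludes exactly as you do by invoking Theorems~\ref{theoremintro-sommeconnexe} and~\ref{theoremintro-Kleinianconnectedsum}. The ``delicate point'' you flag is handled in the paper simply by observing that the Schottky fundamental domain $\X\setminus\cup_i(H_i^-\cup H_i^+)$ is already \emph{open} (the separating handlebodies being compact), so a bouquet contained in it automatically has an open neighbourhood inside it and no shrinking is needed; your phrase ``shrinking the bouquets'' does not quite make sense and should be dropped.
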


\par Results generalizing the Klein-Maskit combination theorem
were proved for Anosov subgroups in \cite{dey_combination_2019,dey_klein-maskit_2023},
to which Theorem \ref{theoremintro-Kleinianconnectedsum}
gives a concrete geometric interpretation
in the case where the considered Kleinian flag manifolds
are quotients of Anosov subgroups.
However, not all Kleinian flag manifolds arise from Anosov subgroups,
and  Theorem \ref{theoremintro-Kleinianconnectedsum} gives thus
a more general ``combination'' result for Kleinian flag manifolds.
A first necessary condition for a Kleinian example to arise from an Anosov subgroup
is indeed that the holonomy group
should be word-hyperbolic,
which excludes, for instance, the following examples.
\par The action of the Heisenberg group $\Heis{3}\subset\PGL{3}$ on $\X$
admits an open orbit $O$ (and actually only one).
The action of the lattice $\mathrm{Heis}_\Z(3)$ on $O$ is thus properly discontinuous
and cocompact. This yields a Kleinian flag manifold $\mathrm{Heis}_\Z(3)\backslash O$
whose holonomy is nilpotent, preventing it to arise from an Anosov representation
(see \cite[\S 4.2.3]{mion-mouton_partially_2022} for more details on these examples).
We note, however, that the flag surgeries cannot be applied to these flag manifolds,
since they do not contain any $\alpha-\beta$ bouquet of circles.
The $\alpha$ and $\beta$-leaves of these nil-manifolds examples are indeed the stable and unstable
leaves of partially hyperbolic diffeomorphisms
(see \cite[\S 1.1]{mion-mouton_partially_2022}),
and as such none of them is closed.
The authors do not know of any example of Kleinian flag manifold which does not arise
up to finite index
from an Anosov
subgroup and which contains an embedded $\alpha-\beta$ bouquet of circles.

\subsection{Flag structures beyond Kleinian examples}
\label{subsection-nonKleinian}
To the best of our knowledge 
all of the flag structures described in the literature are Kleinian
(most of them arising furthermore from \emph{Anosov representations}
as we explained previously),
to the exception of
the non-compact flag structure constructed in \cite{falbel}
on the complement of a knot.
A general way to produce examples of closed flag structures
(more generally of $(G,X)$-structures) is the
\emph{Ehresman-Thurston} principle,
asserting that the set of morphisms from $\piun{M}$ to $\PGL{3}$
that are holonomy morphisms of a flag structure on a closed manifold $M$
is open.
In a very specific situation,
suggested to the authors by Charles Frances and described in subsection
\ref{subsection-deformationstriviales},
this approach yields indeed non-Kleinian deformations.
These are however topologically constrained to $\Sigma_2\times\Sn{1}$
with $\Sigma_2$ a genus two closed, connected and orientable surface.
\par The initial motivation
of this paper
and of the surgery introduced therein
was precisely to widen the realm of known flag structures
by producing other \emph{non-Kleinian} closed flag manifolds.
In the following result
proved in subsections
\ref{soussection-suitablecovering}
and \ref{subsection-proofnonKleiniangluings},
we use
flag structures surgeries
to provide a general recipe
to construct non-Kleinian flag manifolds.
We denote by $\Sigma_3$ the genus three closed, orientable and connected surface,
and we say that a flag structure is \emph{virtually Kleinian}
if it has a Kleinian covering.

\begin{theoremintro}\label{theoremintro-structuresnonKleiniennesparsommeconnexe}
There exists a virtually Kleinian flag structure on $M=\Sigma_3\times\Sn{1}$
such that,
for any $\alpha-\beta$ bouquet $B_M\subset M$ of two circles,
and for any Kleinian flag manifold $N$ satisfying
 \begin{enumerate}
 \item $N$ contains an $\alpha-\beta$ bouquet
 $B_N$ of two circles admitting a neighbourhood which embeds in $\X$,
\item the holonomy group of $N$ contains a loxodromic element,
\end{enumerate}
we have the following. For any flag surgery $S$ of $M$ and $N$ along $B_M$ and $B_N$, the
 developing map $\delta$ of $S$ is surjective onto $\X$.
 In particular $\delta$ is not a covering map,
 and $S$ is not virtually Kleinian.
\end{theoremintro}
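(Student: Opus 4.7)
The plan is first to construct the required virtually Kleinian flag structure on $M = \Sigma_3 \times \Sn{1}$, which is carried out in Section~\ref{soussection-suitablecovering}: one takes $M$ to be a suitable finite cover of a Schottky-type Kleinian flag manifold, with Kleinian holonomy group $\Gamma_M \subset \PGL{3}$ acting cocompactly on a proper open set $\Omega_M \subsetneq \X$, arranged so that for any loxodromic element $h \in \PGL{3}$ the $\Gamma_M$-orbit of the repelling $\alpha$-$\beta$ bouquet $B^-_h$ meets $\Omega_M$. This geometric property of $M$ is the key ingredient to be exploited in the surjectivity argument below.

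Given any flag surgery $S$ of $M$ and $N$ above $B_M$ and $B_N$, I would first analyze $\pi_1(S)$ and its holonomy via the surgery decomposition described in paragraph~\ref{subsection-definitionexemples}. Van~Kampen's theorem shows that $\pi_1(S)$ contains an infinite subgroup (coming from the $M$-piece $M \setminus \nu(B_M)$), hence is infinite; and the holonomy $\rho_S \colon \pi_1(S) \to \PGL{3}$ extends $\rho_M$ on the $M$-subgroup and equals a $\PGL{3}$-conjugate of $\rho_N$ on the $N$-subgroup. In particular the developing image $\Omega_S := \delta(\tilde{S})$ is open, invariant under $\Gamma_S := \rho_S(\pi_1(S))$, and contains $\PGL{3}$-translates of both $\Omega_M$ and $\Omega_N$.

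The main step---and the main obstacle---is proving $\Omega_S = \X$. Since $\Omega_S$ is open and $\Gamma_S$-invariant, it suffices to show that the $\Gamma_S$-saturation of $\Omega_M$ covers $\X$. Here I would use the loxodromic element $h \in \rho_N(\pi_1(N))$ granted by hypothesis~(2), which (up to conjugation) lies in $\Gamma_S$: it acts on $\X$ with attracting and repelling bouquets $B^\pm_h$, and for any $p \in \X \setminus B^+_h$ the backwards orbit $h^{-n}(p)$ accumulates on $B^-_h$. By the property built into $M$, some $\gamma \in \Gamma_M$ satisfies $\gamma \cdot B^-_h \cap \Omega_M \neq \emptyset$, so for $n$ sufficiently large $\gamma h^{-n}(p) \in \Omega_M$, whence $p \in h^n \gamma^{-1} \Omega_M \subset \Omega_S$. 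Points of $B^+_h$ are treated similarly using a $\Gamma_M$-conjugate of $h$ whose attracting bouquet avoids them, yielding $\Omega_S = \X$.

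The remaining conclusions are topological. The flag space $\X$, on which the compact group $\SO{3} \subset \PGL{3}$ acts transitively with finite stabilizer, has finite fundamental group. Since $\pi_1(S)$ is infinite, $\tilde{S}$ is non-compact, so no covering map $\tilde{S} \to \X$ can exist (any covering of a closed manifold with finite $\pi_1$ is finite-sheeted and has compact total space). Thus $\delta$ is not a covering. Finally, if a finite cover $S'$ of $S$ were Kleinian, the developing map of $S'$ would coincide with $\delta$ on the common universal cover; by Kleinianity it would be a covering onto its image, but $\delta$ is surjective onto $\X$ without being a covering---a contradiction.
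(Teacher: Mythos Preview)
There is a genuine gap in your plan, and it concerns the step you treat as routine: the assertion that the developing image $\Omega_S$ of the surgery contains a $\PGL{3}$-translate of $\Omega_M$. In the surgery one removes a handlebody $K_M$ from $M$, and the $M$-piece of $\tilde{S}$ only develops through $\delta_M$ restricted to $E\setminus\pi_M^{-1}(K_M)$; for a generic Kleinian $M$ this image is $\Omega_M$ \emph{minus} the $\Gamma_M$-orbit of the removed neighbourhood, not all of $\Omega_M$. The whole point of passing to the double cover $\Sigma_3\times\Sn{1}\to\Sigma_2\times\Sn{1}$ in Section~\ref{soussection-suitablecovering} is precisely to manufacture a \emph{redundant} lift $U'$ of the bouquet neighbourhood that develops to the same region as $U$ (because the holonomy kills $\pi_1(\Sigma)$), so that after excising $U$ one still has $\delta_M(E\setminus\pi_M^{-1}(U))=\Omega$; this is Lemma~\ref{lemma-revetementM}, and together with Lemma~\ref{lemma-imagedeveloppantesommeconnexe} it yields $\Omega\subset\delta_S(\tilde{S})$. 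The property you ascribe to $M$---that the $\Gamma_M$-orbit of any repelling bouquet meets $\Omega_M$---is neither what the paper proves nor sufficient: since $\Omega_M=\X\setminus(B^-\cup B^+)$ has one-dimensional complement and $\Gamma_M$ preserves it, your condition holds for almost every loxodromic $h$ already for the base $M_0=\Sigma_2\times\Sn{1}$ without any cover, yet fails outright when $B^-_h\in\{B^-,B^+\}$.

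Your surjectivity argument also has a smaller gap even if one grants $\Omega_M\subset\Omega_S$. From $\gamma B^-_h\cap\Omega_M\neq\varnothing$ you cannot conclude $\gamma h^{-n}(p)\in\Omega_M$ for large $n$: the orbit $h^{-n}(p)$ accumulates on some subset of $B^-_h$, and nothing prevents that accumulation set from lying in $\gamma^{-1}(B^-\cup B^+)$. The paper avoids this by a different route: once $\Omega\subset\delta_S(\tilde{S})$, it observes that the anti-flag involution used in the gluing sends a bouquet from the limit set of $\Gamma_N$ \emph{inside} the excised neighbourhood $\mathcal{U}_0\subset\Omega$, so the corresponding conjugate $h'\in\Gamma_S$ has both $B^\pm_{h'}\subset\Omega\subset\delta_S(\tilde{S})$, and Lemma~\ref{lemma-KulkarniPinkall} finishes immediately. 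Your final paragraph on $\delta$ not being a covering and $S$ not being virtually Kleinian is correct and matches the paper.
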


Proposition \ref{proposition-bouquetsSchottky}
shows that Theorem \ref{theoremintro-structuresnonKleiniennesparsommeconnexe}
can may be applied when $N$ is a Schottky flag manifold.
This construction is inspired by the one of \cite{kulkarni_uniformization_1986}
for conformal geometry and of \cite{falbel_spherical_1994} for CR structures.

\subsection{Further questions}
The flag surgery introduced here raises a number of questions
that the authors hope to consider in the future.
The first question concerns the possible \emph{deformations}
of flag surgeries for fixed initial flag manifolds
and $\alpha-\beta$ bouquets inside those.
This question was investigated in \cite{izeki_deformation_1996}
for conformal structures, where non-trivial deformations in the Teichmüller space
of conformal structures were described.
\par Another question is the higher-dimensional generalization
of the procedure described here.
There are several  higher-dimensional analogs to flag structures,
among which there are those modelled  on complete flags but also structures modelled on partial flags. The spaces
$\mathbf{X}_{2n+1}$
of pointed projective hyperplanes of $\RP{n+1}$
under the action of $\PGL{n+2}$
are particularly interesting to the authors.
These are indeed the flat \emph{Lagrangian-contact structures},
which are the natural higher-dimensional analog to path structures.  

\subsection{Acknowledgments}
The authors would like to thank Charles Frances
for his interest and suggestions about this work,
and especially for initially suggesting them to consider the
deformations described in subsection
\ref{subsection-deformationstriviales}.
The second author would like to thank Jean-Philippe Burelle
for an interesting and motivating discussion about deformations of flag structures.
Lastly, the authors thank the anonymous referees for their multiple
helpful and careful remarks and suggestions.

\section{Flag structures and Schottky flag manifolds}
In this section, we summarize the geometric and dynamical  information
that we will need about the flag space and the action of $\PGL{3}$,
and present the examples described in
\cite{mion-mouton_geometrical_2022}
on which our constructions will be based.
We refer to the latter paper for more details
and for the proofs of the results claimed in this section.

\subsection{Flag structures}\label{subsection-flagstructures}
The flag space
$\X$ is an orientable
three-dimensional closed manifold,
with universal cover $\Sn{3}$ and fundamental group of  cardinality 8.
The stabilizer of the standard flag
$o\coloneqq([e_1],[e_1,e_2])$ for the transitive action of $\PGL{3}$ is the subgroup
$\Pmin\subset\PGL{3}$ of upper-triangular matrices,
minimal parabolic subgroup of $\PGL{3}$
(where $(e_i)$ denotes in all of this paper the standard basis of $\R^n$).
The action of $\PGL{3}$ induces thus an equivariant identification of $\X$
with the homogeneous space $\PGL{3}/\Pmin$.
\par A structure locally modelled
on $\X$ is formalized in the following way.
\begin{definition}\label{definition-flagstructure}
A \emph{$(\PGL{3},\X)$-atlas}
on a three-manifold $M$
is an atlas of connected charts of $M$ with values in $\X$, whose transition functions
are restrictions of elements of $\PGL{3}$.
A \emph{$(\PGL{3},\X)$-structure},
or \emph{flag structure} on $M$
is
a maximal $(\PGL{3},\X)$-atlas on $M$,
and a \emph{flag manifold} a three-manifold endowed with a flag structure.
 A \emph{$(\PGL{3},\X)$-morphism}, or \emph{flag morphism}
 between two flag structures is a map
 which reads in any connected
 $(\PGL{3},\X)$-charts as the restriction of an element
 of $\PGL{3}$.
\end{definition}
We recall (see for instance
\cite{thurston_three-dimensional_1997,canary_notes_1987} for more details)
that for any $(\PGL{3},\X)$-structure on $M$,
there exists:
\begin{enumerate}
 \item a local diffeomorphism
 $\delta\colon\tilde{M}\to\X$ which is a
 $(\PGL{3},\X)$-morphism, called the \emph{developing map},
 \item and a \emph{holonomy morphism} $\rho\colon\piun{M}\to\PGL{3}$
 for which $\delta$ is $\rho$-equivariant.
\end{enumerate}
Moreover,
if the  flag structure is fixed, then
such a pair $(\delta,\rho)$ is unique up to the action
$g\cdot(\delta,\rho)=(g\circ\delta,g\rho g^{-1})$
of $\PGL{3}$
and, reciprocally, any such pair defines a unique compatible flag structure.
\begin{definition}\label{definition-alphabetacircles}
 The $\alpha$- and $\beta$-circles of
$x=(p,D)\in\X$ are denoted by
\begin{equation}\label{equation-cerclesalphabeta}
 \Calpha(x)=\enstq{(p,D')}{D'\ni p}=\Calpha(p)
 \text{~and~}
 \Cbeta(x)=\enstq{(p',D)}{p'\in D}=\Cbeta(D).
\end{equation}
The $\alpha$- and $\beta$-circles of $\X$, being $\PGL{3}$-equivariant,
induce on any flag manifold $M$ a pair
$\Lm_M=(\Falpha,\Fbeta)$
of transverse one-dimensional $\alpha$- and $\beta$-foliations,
which we call the \emph{flat path structure}
induced by the flag structure of $M$.
\end{definition}
\par The reason for this  terminology is that
the pair of $\alpha$- and $\beta$-foliations
induced on a three-manifold by a flag structure
are specific instances of a \emph{path structure},
which is a pair $(E^\alpha,E^\beta)$
of smooth transverse line fields
on a three-manifold whose sum $E^\alpha\oplus E^\beta$ is a contact structure
(see \cite{ivey_cartan_2016}
or \cite[\S 1.2]{mion-mouton_geometrical_2022} for more details).
A path structure $(E^\alpha,E^\beta)$ is indeed equivalent
to its integral foliations $(\Falpha,\Fbeta)$,
and it can be checked that the $\alpha$- and $\beta$-foliations
of a flag structure define a path structure,
\emph{i.e.} that $\Fitan{\Calpha}\oplus\Fitan{\Cbeta}$
is a contact distribution in $\X$.
Path structures can be interpreted as \emph{Cartan geometries} modelled
on the flag space $\X$
which endows them with a notion of \emph{curvature},
and the path structures induced by flag structures are precisely the ones whose
curvature vanishes and are therefore called \emph{flat}
(see \cite{sharpe_differential_1997} or
\cite[\S 2]{mion-mouton_partially_2022} for more details).
A flag structure and its associated flat path structure
turns out to be equivalent
in the following sense.
\begin{proposition}\label{proposition-Liouville}
Let $M$ and $N$ be two flag manifolds,
of associated flat path structures $\Lm_M$ and $\Lm_N$.
Then a diffeomorphism $f\colon M\to N$
is a flag isomorphism if, and only if it is a path structure isomorphism.
\end{proposition}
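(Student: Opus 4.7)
The forward direction is immediate: if $f$ reads in some pair of connected $(\PGL{3},\X)$-charts as the restriction of some $g\in\PGL{3}$, then the $\PGL{3}$-equivariance of the projections $\pi_\alpha$ and $\pi_\beta$ implies that $g$ permutes the $\alpha$- and $\beta$-circles of $\X$, so $f$ permutes the leaves of $\Falpha$ and of $\Fbeta$.

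For the converse, which is the substantive content, the plan is to invoke the Cartan-geometric interpretation of path structures recalled just above the statement. By Cartan's method of equivalence, each path structure $(M,\Lm_M)$ canonically determines a $\Pmin$-principal bundle $P_M\to M$ equipped with a normal regular Cartan connection $\omega_M$, in a functorial manner: a path structure isomorphism $f\colon M\to N$ lifts uniquely to a bundle isomorphism $\tilde f\colon P_M\to P_N$ satisfying $\tilde f^*\omega_N=\omega_M$. The flat path structures are precisely those with vanishing Cartan curvature, so in our case both $\omega_M$ and $\omega_N$ are locally isomorphic to the Maurer--Cartan form on the model bundle $\PGL{3}\to\X$. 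A classical rigidity statement for flat Cartan geometries (obtained by integrating the Maurer--Cartan equation) then forces $\tilde f$ to coincide, in local Cartan charts, with left translation by a fixed element of $\PGL{3}$; projected to the base, this means that $f$ reads locally as the restriction of an element of $\PGL{3}$, i.e.\ that $f$ is a flag morphism.

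The main obstacle I foresee is the correct invocation of the functoriality of the equivalence between path structures and normal Cartan geometries, which is the deep content of Cartan's method of equivalence; this will be imported from \cite{sharpe_differential_1997} and \cite[\S 2]{mion-mouton_partially_2022}. A more elementary alternative that bypasses Cartan geometry would be to use developing maps to reduce to showing that a local diffeomorphism $\phi$ of $\X$ preserving both foliations belongs to $\PGL{3}$: in a sufficiently small joint flow box one writes $\phi(p,D)=(\bar\phi(p),\check\phi(D))$, the incidence condition $p\in D\Rightarrow\bar\phi(p)\in\check\phi(D)$ forces $\bar\phi$ to send projective lines to projective lines, and the local version of the fundamental theorem of projective geometry then concludes. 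This second route is shorter but less aligned with the Cartan-geometric framework used throughout the paper.
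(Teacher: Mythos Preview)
Your proposal is correct. Interestingly, the paper takes precisely your second, ``more elementary'' route rather than the Cartan-geometric one you lead with: it reduces via $(\PGL{3},\X)$-charts (rather than developing maps, but the reduction is the same) to the local statement that a path-structure-preserving diffeomorphism between connected open subsets of $\X$ is the restriction of an element of $\PGL{3}$, and then simply cites this as a ``Liouville theorem'' from \cite[Theorem 2.9]{mion-mouton_partially_2022}. Your Cartan-geometric argument is also valid and more conceptual---it explains \emph{why} such a Liouville theorem holds, namely because the normal Cartan connection is functorially a complete invariant of the path structure and flat Cartan geometries are locally modelled on $\PGL{3}\to\X$---but it imports heavier machinery than the paper needs, since the Liouville statement is already available as a black box in the very reference you propose to cite. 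The chart-by-chart reduction you sketch as your alternative is essentially the entirety of the paper's proof.
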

A diffeomorphism $f\colon M\to N$ is a path structure isomorphism if
for any $x\in M$:
$\Diff{x}{f}(E^\alpha_M(x))=E^\alpha_N(f(x))$ and
$\Diff{x}{f}(E^\beta_M(x))=E^\beta_N(f(x))$.
\begin{proof}[Proof of Proposition \ref{proposition-Liouville}]
 The direct implication is straightforward,
 and the reverse one follows from a path structure analog of
 the ``Liouville theorem''.
 Let $f\colon M\to N$ be a path structure isomorphism from $\Lm_M$ to $\Lm_N$.
 Then in any two $(\PGL{3},\X)$-charts
 $\varphi$ and $\psi$ from connected open subsets $U\subset M$ and $f(U)\subset N$,
 $F=\psi\circ f\circ\varphi^{-1}$ is a diffeomorphism between the connected open subsets
 $\varphi(U)$ and $\psi\circ f(U)$ of $\X$,
 preserving the flat path structure of $\X$ defined by $\alpha$ and $\beta$-circles.
 According to the ``Liouville theorem''
 \cite[Theorem 2.9]{mion-mouton_partially_2022},
 $F$ is thus the restriction of an element $g\in\PGL{3}$.
 Hence $f$ is indeed a $(\PGL{3},\X)$-isomorphism which concludes the proof.
\end{proof}

\subsection{Dynamics of $\PGL{3}$ on $\X$}\label{soussection-dynamiqueX}
 It is known that the action of
the conformal group $\PBio{1}{n+1}$ on the round sphere $\Sn{n}$
exhibits a dynamics
called ``North-South''.
For any sequence $(g_n)$ going to infinity,
there exists a subsequence of
$(g_n)$ which has a repelling point $p_-$
and an attracting point $p_+$,
and the restriction of $(g_n)$ to $\Sn{n}\setminus\{p_-\}$
converges moreover to the constant map $p_+$.
This dynamics is due to the fact that
$\PBio{1}{n+1}$ has rank one
as opposed to $\PGL{3}$ which has \emph{rank two},
meaning that the Cartan subalgebra
$\mathfrak{a}$ of $\slR{3}$ has dimension two.
The dimension two leaves space
in a Weyl chamber of $\mathfrak{a}$
for three different ways to go
to infinity,
namely along one of the walls,
or in the interior of the chamber.
In the latter case however,
a kind of North-South dynamics persists
with the attracting and repelling points
being respectively replaced by an attracting and a repelling
\emph{$\alpha-\beta$ bouquet of circles}.
\begin{definition}\label{definition-alphabetabouquet}
 For any $x\in\X$, the \emph{$\alpha-\beta$ bouquet} of $x$
 is the union
 \begin{equation*}\label{equation-alphabetabouquet}
 \Balphabeta(x)=\Calpha(x)\cup\Cbeta(x)
 \end{equation*}
 of the $\alpha$
 and $\beta$-circles of $x$.
\end{definition}
\par Let us say that $g\in\PGL{3}$
is \emph{loxodromic} if any representative of $g$
has three real eigenvalues
of pairwise distinct absolute values $a>b>c>0$,
whose corresponding eigenlines are denoted by $p_+$, $p_\pm$ and $p_-$.
Then $x_-=(p_-,[p_-,p_\pm])$ and $x_+=(p_+,[p_+,p_\pm])$
are the \emph{ repelling and  attracting fixed points of $g$} in $\X$,
and we call their $\alpha-\beta$ bouquet of circles
\begin{equation}\label{equation-BmoinsBplusdeg}
 \Bmoins(g)=\Calpha(x_-)\cup\Cbeta(x_-)
 \text{~and~}
 \Bplus(g)=\Calpha(x_+)\cup\Cbeta(x_+)
\end{equation}
the \emph{ repelling and  attracting bouquet of circles of $g$}.
Note in particular that $\Bplus(g^{-1})=\Bmoins(g)$
(see for instance \cite[Remark 2.14]{mion-mouton_geometrical_2022}).
\begin{example}\label{example-matricediagonalebouquets}
 Let $g=\Diag(a,b,c)$ be a diagonal matrix
with $a>b>c>0$.
Then
$x_-=([e_1],[e_1,e_2])$, $x_+=([e_3],[e_2,e_3])$, and thus:
\begin{equation*}\label{equation-bouquetsdiagonal}
 \Bmoins(g)=\Calpha[e_1]\cup\Cbeta[e_1,e_2]
 \text{~and~}
 \Bplus(g)=\Calpha[e_3]\cup\Cbeta[e_2,e_3].
\end{equation*}
\end{example}

The set of compact subsets of $\X$ is endowed with the
topology defined by the \emph{Hausdorff distance}
(induced by any Riemannian metric on $\X$,
see for instance \cite[\S 2.1]{mion-mouton_geometrical_2022}
for more details),
for which it is a compact metrizable space.
We have then the following result,
which is a direct reformulation of
\cite[Lemma 2.2, Lemma 2.21
and Example 2.22]{mion-mouton_geometrical_2022}.
\begin{lemma}\label{lemma-bouquetattractifrepulsif}
Let $g\in\PGL{3}$ be a loxodromic element and
$K\subset\X\setminus\Bmoins(g)$ be a compact subset.
Then any accumulation point
of the sequence $(g^n(K))$
is contained in $\Bplus(g)$.
\end{lemma}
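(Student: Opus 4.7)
The plan is to reduce to a diagonal normal form for $g$ and then analyze the action of $g^n$ on $\X$ in explicit coordinates.

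Since the statement is equivariant under conjugation in $\PGL{3}$, I will conjugate and assume $g = \Diag(a,b,c)$ with $a > b > c > 0$. In these coordinates the bouquets $\Bmoins(g)$ and $\Bplus(g)$ are attached to the extreme eigenlines and eigenplanes of $g$; writing the base point as $p_0 = [x:y:z]$ and encoding $D_0$ by a linear annihilator $[u:v:w]$ subject to the incidence relation $ux+vy+wz=0$, the condition $y_0 = (p_0,D_0) \notin \Bmoins(g)$ translates into explicit non-vanishing conditions, one on a pair among $(x,y,z)$ and one on a pair among $(u,v,w)$.

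I would next establish the pointwise version: for any such $y_0$, all accumulation points of $(g^n(y_0))$ lie in $\Bplus(g)$. The diagonal action reads $g^n \cdot p_0 = [a^n x : b^n y : c^n z]$ for the base point, and $g^n \cdot [u:v:w] = [u/a^n : v/b^n : w/c^n]$ for the annihilator. Renormalizing by the dominant surviving power on each side, and distinguishing four cases according to which of the extremal coordinates vanish, three candidate limits explicitly land on the attractive $\alpha$-circle, the attractive $\beta$-circle, or the attractive fixed point $x_+$; the fourth combination is ruled out by the incidence relation.

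Finally, I would upgrade this to the required Hausdorff statement. Given an accumulation point $L = \lim_k g^{n_k}(K)$ and any $\ell = \lim_k g^{n_k}(y_k) \in L$ with $y_k \in K$, a subsequence extraction gives $y_k \to y_\infty \in K$. Using the positivity of $d(K,\Bmoins(g))$, the coordinates of $y_k$ satisfy uniform lower bounds that force, after the same dominant-power normalization, the base-point part of $g^{n_k}(y_k)$ to accumulate on the projective line spanned by the two top eigenlines, and the line part to accumulate on a projective line through the top eigenline. The preserved incidence relation $\ell_p \in \ell_D$ then forces $\ell \in \Bplus(g)$.

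The main obstacle is this last uniformity step: the family $(g^n)$ is not equicontinuous on $\X$, so for $y_k \to y_\infty$ the limit of $g^{n_k}(y_k)$ can genuinely differ from the pointwise limit of $g^n(y_\infty)$, depending on the competition between the ratios $(b/a)^{n_k}$, $(c/a)^{n_k}$, $(c/b)^{n_k}$ and the rates at which coordinates of $y_k$ may vanish. Showing that every such competing limit still lands in $\Bplus(g)$ is the technical heart of the argument, and is resolved precisely by combining the uniform coordinate bounds coming from $d(K,\Bmoins(g)) > 0$ with the incidence constraint.
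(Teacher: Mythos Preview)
Your argument is correct. The pointwise case analysis is clean, and the crucial uniform step is exactly right: compactness of $K$ away from $\Bmoins(g)$ gives, after normalizing representatives on the unit sphere, a uniform lower bound $|x_k|^2+|y_k|^2\geq\varepsilon$ on the point side and $|v_k|^2+|w_k|^2\geq\varepsilon$ on the annihilator side. Dividing $[a^{n_k}x_k:b^{n_k}y_k:c^{n_k}z_k]$ by $\max(a^{n_k}|x_k|,b^{n_k}|y_k|)\geq b^{n_k}\sqrt{\varepsilon/2}$ forces the third coordinate to vanish in the limit, so any subsequential limit $\ell_p$ lies on $[e_1,e_2]$; the dual computation forces $\ell_D\ni[e_1]$. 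The incidence $\ell_p\in\ell_D$ then gives the dichotomy $\ell_p=[e_1]$ or $\ell_D=[e_1,e_2]$, i.e.\ $\ell\in\Bplus(g)$, precisely as you state.

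As for comparison with the paper: the paper does not give a proof of this lemma at all. It simply records it as ``a direct reformulation of \cite[Lemma~2.2, Lemma~2.21 and Example~2.22]{mion-mouton_geometrical_2022}'' and moves on. Your approach is therefore not an alternative to the paper's proof but a self-contained replacement for an external citation. The cited reference treats the dynamics of sequences going to infinity in $\PGL{3}$ in a more structural way (via Cartan/$KAK$ decompositions and a general description of limit maps on flag varieties), which has the advantage of handling all asymptotic regimes in $\PGL{n}$ uniformly. Your bare-hands coordinate argument is more elementary and entirely adequate for the loxodromic, rank-two situation at hand; it also makes transparent exactly why the incidence relation is what rescues the non-equicontinuous limit and forces the accumulation set to be the bouquet rather than a full $2$-torus.
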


\subsection{Schottky flag manifolds}\label{soussection-Schottkyungenerateur}
Let $\{g^t\}_{t\in\R}\subset\PGL{3}$
be a 1-parameter subgroup of loxodromic elements,
of respective repelling and attracting points $x_-,x_+\in\X$,
and repelling and attracting bouquet of circles
$B^-=\Balphabeta(x_-)$, $B^+=\Balphabeta(x_+)$.
Let $\Omega=\X\setminus(B^-\cup B^+)$,
$g=g^1$ and
$\Gamma$ be the cyclic group $\langle g \rangle\simeq\langle\Z\rangle$.
Then according to \cite[Lemma 3.1]{mion-mouton_geometrical_2022},
there exists a compact connected neighbourhood $H$ of $B^-$,
which is  homeomorphic to a genus two handlebody and such that:
\begin{enumerate}
 \item $\Sigma\coloneqq\partial H$ is a genus  two closed surface,
 transverse to the orbits of $\{g^t\}$;
 \item and
\begin{equation}\label{equation-parametrisationOmega}
 (x,t)\in\Sigma\times\R\mapsto g^t(x)\in\Omega
\end{equation}
is a diffeomorphism.
\end{enumerate}
 We recall that a genus two handlebody is
a compact, orientable and connected three-manifold with boundary,
which is obtained from a three-ball by attaching
two 1-handles.
Equivalently, it is obtained from a solid torus by attaching a single
1-handle.
\par The diffeomorphism \eqref{equation-parametrisationOmega}
induces a diffeomorphism from
$\Sigma\times\Sn{1}$ to the quotient
$M_0\coloneqq\Gamma\backslash\Omega$
(where $\Sn{1}=\R/\Z$),
as well as an identification between the fundamental group of $M_0$
and $\piun{\Sigma}\times\Z$
($\piun{\Omega}$ being identified with $\piun{\Sigma}$
since $\Sigma$ is a deformation retract of $\Omega$).
Furthermore with $H^-\coloneqq H$,
$H^+\coloneqq \X\setminus \Int(g(H^-))$ and 
\begin{equation}\label{equation-definitionDdomainefondamental}
 D\coloneqq \X\setminus(H^-\cup H^+),
\end{equation}
$D$ is a fundamental domain for the action of $\Gamma$ on $\Omega$,
and is naturally identified
to $\Sigma\times (0,1)$
by the diffeomorphism \eqref{equation-parametrisationOmega}.
\par Since $M_0$ is the quotient of $\Omega$ by the free
and proper action of $\Gamma$, it inherits a Kleinian flag structure
from the one of $\Omega\subset\X$,
and we now describe the developing map and holonomy morphism
of this flag structure.
With $\pi_\Omega\colon E\to\Omega$ the universal covering map
of $\Omega$
and $\pi_{\Gamma}\colon\Omega\to M_0$ the canonical projection,
$\pi_{\Gamma}\circ\pi_{\Omega}\colon E\to M_0$
is the universal covering map of $M_0$.
Hence $\piun{M_0}\equiv\piun{\Sigma}\times\Z$
acts on $E$, $\pi_\Omega$
is $\piun{\Sigma}\equiv\piun{\Omega}$-invariant,
and is thus equivariant for the action of
$\piun{M_0}\equiv\piun{\Sigma}\times\Z$
with respect to the morphism
\begin{equation}\label{equation-rhoM0}
 \rho_{M_0}\colon(\lambda,n)\in\piun{\Sigma}\times\Z\mapsto g^n.
\end{equation}
Hence $(\pi_\Omega,\rho_{M_0})$
is a pair of developing map and holonomy morphism of $M_0$,
and we denote $\delta_{M_0}=\pi_\Omega$.
\par In \cite[\S 1.3.1 Theorem D]{mion-mouton_geometrical_2022},
the second author describes
Kleinian flag manifolds $\Gamma\backslash\Omega(\Gamma)$ obtained from
\emph{Schottky subgroups} of $\PGL{3}$, that we will call
\emph{Schottky flag manifolds}.
These examples are the analog of the previous
construction
for non-cyclic free subgroups of $\PGL{3}$
(their existence also follows from \cite{guichard_anosov_2012}).

\subsection{An obstruction to be Kleinian}
 An important and basic question
concerning locally homogeneous structures
is to find examples of \emph{non-Kleinian} structures, and
to this end it is necessary to find an obstruction
for a structure to be Kleinian.
In this subsection we establish such a useful criterion
for flag structures,
relying on dynamical information on the holonomy group
of the structure.
This is inspired from the criterion initially developed in
\cite[p.20-22]{kulkarni_uniformization_1986} for
conformal Riemannian structures.

\begin{lemma}\label{lemma-KulkarniPinkall}
Let $\delta(\tilde{M})\subset\X$
 be the developing map image
 and
 $\Gamma\subset\PGL{3}$ be the
 holonomy group of a flag structure on a closed three-dimensional
 manifold $M$.
\begin{enumerate}
 \item Assume that there exists a loxodromic element $g\in\Gamma$
 such that $\Bmoins(g)\subset\delta(\tilde{M})$.
Then $\X\setminus\Bplus(g)\subset\delta(\tilde{M})$.
\item Assume that $\Gamma$ contains a loxodromic element
$g$ such that
$\delta(\tilde{M})$ contains the attracting and repelling bouquets
$\Bmoins(g)$ and $\Bplus(g)$ of $g$.
Then $\delta(\tilde{M})=\X$.
\item If $\delta(\tilde{M})=\X$ and $\piun{M}$ is infinite,
then $\delta$ is not a covering map.
In particular, $M$ is not virtually Kleinian.
\end{enumerate}
\end{lemma}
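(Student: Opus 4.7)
The plan is to handle the three items in order, using only Lemma \ref{lemma-bouquetattractifrepulsif} on the dynamics of loxodromic elements, the $\rho$-equivariance and the local-diffeomorphism property of the developing map, and the fact recalled in paragraph \ref{subsection-flagstructures} that $\X$ has universal cover $\Sn{3}$ (in particular, finite fundamental group).

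For item \emph{(1)}, I would fix $x\in\X\setminus\Bplus(g)$ and analyse the backward orbit $(g^{-n}(x))_{n\geq 0}$. Since $\Bmoins(g^{-1})=\Bplus(g)$ and $\Bplus(g^{-1})=\Bmoins(g)$, applying Lemma \ref{lemma-bouquetattractifrepulsif} to $g^{-1}$ with the compact $K=\{x\}$ forces every accumulation point of this orbit to lie in $\Bmoins(g)$, and hence in $\delta(\tilde{M})$ by hypothesis. As $\delta$ is a local diffeomorphism its image is open in $\X$, so some iterate $g^{-n_{0}}(x)$ already lies in $\delta(\tilde{M})$; picking $\tilde y\in\tilde M$ with $\delta(\tilde y)=g^{-n_0}(x)$ and $\gamma\in\piun{M}$ with $\rho(\gamma)=g$, the $\rho$-equivariance of $\delta$ yields $\delta(\gamma^{n_0}\cdot\tilde y)=g^{n_0}\delta(\tilde y)=x$, whence $x\in\delta(\tilde{M})$. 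Item \emph{(2)} is then immediate: (1) produces $\X\setminus\Bplus(g)\subset\delta(\tilde{M})$, and the assumption $\Bplus(g)\subset\delta(\tilde{M})$ completes this to $\X$.

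For \emph{(3)} I would argue by contradiction. If $\delta\colon\tilde{M}\to\X$ were a covering map, then, $\tilde M$ being simply connected, the universal property of the universal cover of $\X$ would identify $\tilde M$ with $\Sn{3}$, and in particular $\tilde M$ would be compact. Since $M$ is closed and $\piun{M}$ acts freely and properly discontinuously on $\tilde M$, compactness of $\tilde M$ forces the covering $\tilde M\to M$ to be finite-sheeted and thus $\piun{M}$ to be finite, contradicting the hypothesis. For the virtually Kleinian consequence, any finite covering $M'\to M$ shares its universal cover $\tilde M$ with $M$ and carries (up to the $\PGL{3}$-action) the same developing map $\delta$, so $\delta(\tilde M)=\X$ continues to hold when $\tilde M$ is regarded as the universal cover of $M'$; if $M'$ were Kleinian, its developing map would be a covering onto the open set $\Omega\subset\X$ from which $M'$ is quotiented, forcing $\Omega=\X$ and returning us to the situation just ruled out.

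The only subtle point I foresee is the unfolding of the "virtually Kleinian" hypothesis in (3), which rests on the (standard) observation that the developing map of any Kleinian flag manifold is a covering onto its open image; everything else follows directly from the dynamical lemma and the topology of $\X$.
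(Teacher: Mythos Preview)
Your proof is correct and follows essentially the same approach as the paper: the only variation is in item~(1), where the paper argues dually by pushing a compact neighbourhood of $\Bmoins(g)$ forward under $g^n$ rather than pulling individual points backward under $g^{-n}$, but both arguments rest on Lemma~\ref{lemma-bouquetattractifrepulsif} together with the $\Gamma$-invariance and openness of $\delta(\tilde M)$. One minor remark on~(3): the paper's definition of ``virtually Kleinian'' asks only for \emph{some} Kleinian covering, not necessarily a finite one, but your argument works verbatim for any connected covering since it shares the universal cover $\tilde M$ with $M$.
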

\begin{proof}
1. Since $\Bmoins(g)\subset\delta(\tilde{M})$,
there exists a compact neighbourhood $P$ of $\Bmoins(g)$
which is contained in $\delta(\tilde{M})$.
Then with $K=\X\setminus\Int(P)$,
any accumulation point of $g^n(K)$ is contained
in $\Bplus(g)$ according to Lemma \ref{lemma-bouquetattractifrepulsif}.
In particular for any open neighbourhood $O$ of $\Bplus(g)$,
there exists $n$ such that $g^n(K)\subset O$,
\emph{i.e.} such that $\X\setminus O\subset g^n(\Int(P))$.
But $\delta(\tilde{M})$ is $\Gamma$-invariant,
hence $g^n(\Int(P))\subset\delta(\tilde{M})$ and thus
$\X\setminus O\subset\delta(\tilde{M})$.
Finally
$\X\setminus\Bplus(g)=
\cup_{O}\X\setminus O
\subset\delta(\tilde{M})$,
the union being taken on neighbourhoods $O$ of $\Bplus(g)$,
which concludes the proof of the claim. \\
2. According to the first claim $\delta(\tilde{M})$
contains $\X\setminus\Bplus(g)$,
and it contains $\Bplus(g)$ by assumption,
hence $\delta(\tilde{M})=\X$. \\
3. Assume by contradiction that $\delta$ is a covering map onto $\X$.
Then since $\piun{\X}$ is finite,
$\delta$ is a finite degree covering
and $\tilde{M}$ is thus compact.
This contradicts the fact that $\piun{M}$ is infinite,
and $\delta$ is thus not a covering map.
Assume now by contradiction that $M'$ is a covering of $M$ which is Kleinian.
Then the developing map of $M'$ remains $\delta\colon\tilde{M}\to\X$,
and is a covering map since $M'$ is Kleinian. This contradicts
what we have proved and concludes the proof.
\end{proof}

\section{Flag surgeries and Kleinian examples}\label{section-connectedsum}

\subsection{Definition and main result}\label{subsection-definitionexemples}
\par Let $M,N$ be two
three-manifolds,
$H_M\subset M$ and $H_N\subset N$
be
compact submanifolds of dimension three with boundary  of $M$ and $N$,
and $f\colon \partial H_M\to \partial H_N$ be a
diffeomorphism
between their boundaries
(which are closed two-dimensional
submanifolds in $M$).
Then the \emph{surgery of $M$ and $N$ along $f$}
\begin{equation}\label{equation-definitionconnectedsum}
M\#_f N\coloneqq
 (M\setminus \Int(H_M))\sqcup(N\setminus\Int(H_N))/
 \{\forall x\in \partial H_M :x\sim f(x)\}
\end{equation}
contains natural embeddings of
$M\setminus \Int(H_M)$ and $N\setminus \Int(H_N)$
denoted by $j_M$ and $j_N$,
and has a unique natural smooth structure
extending the ones of
$j_M(M\setminus \Int(H_M))$ and $j_N(N\setminus \Int(H_N))$.
Moreover,
the homeomorphism type of $M\#_f N$  only depends on the homotopy type
of $f$.  For instance, if $H_M$ and $H_N$ are balls then $M\#_f N=M\# N$
is simply the connected sum of $M$ and $N$.
\par If $M$ and $N$ are two flat conformal Riemannian manifolds, then
it is possible to endow their connected sum $M\# N$ with a compatible
flat conformal Riemannian structure, as shown in \cite{kulkarni_principle_1978}.
This construction relies on the existence of specific conformal
automorphisms:
for any open set $U\subset\Sn{n}$,
there exists an \emph{inversion} reversing the two
boundary spheres of $B_2\setminus B_1$,
with $B_1\subset B_2$ two balls contained in $U$.
 An important feature of the construction
of the geometric connected sum in \cite{kulkarni_principle_1978}
is that the spheres $\partial B_1$ and $\partial B_2$
which are exchanged by the inversion,
can be chosen as small as one wants.
\par For flag structures however, such geometric inversions
with respect to spheres
do not exist anymore.
Indeed,
because of the rank two dynamics
described in subsection \ref{soussection-dynamiqueX},
the best that we can hope for is  an involution exchanging two
$\alpha-\beta$ bouquet of circles.
These involutions will exchange the boundaries of two nested
neighbourhoods of a bouquet of two circles,
which are genus two handlebodies.
\par Our goal in this section is to construct a compatible flag structure
on the surgery of two flag manifolds
along genus two handlebodies.
In a flag manifold $M$,
we call \emph{$\alpha-\beta$ bouquet of circles}
a bouquet of two circles
 which is the union
$B=\Falpha(x)\cup\Fbeta(x)$
of two closed $\alpha$- and $\beta$-leaves
intersecting at a single point. For the following definition see Figure \ref{surgery}.

\begin{definition}\label{definition-connectedsum}
 Let $M$ and $N$ be two flag manifolds,
 and $B_M\subset M$, $B_N\subset N$ be two $\alpha-\beta$ bouquet of circles.
 We will say that a flag manifold $S$ is
 \emph{a flag surgery of $M$ and $N$ along $B_M$ and $B_N$},
 if there exists:
 \begin{enumerate}
  \item genus two handlebodies $K_M\subset M$, respectively $K_N\subset N$,
  containing $B_M$, resp. $B_N$ in their interiors,
  and flag structure embeddings $j_M\colon M\setminus K_M\to S$ of image $M'$
  and $j_N\colon N\setminus K_N\to S$ of image $N'$,
  such that $S=M'\cup N'$;
  \item and open subsets $U_M\supset K_M$ and $U_N\supset K_N$
  with flag structure embeddings into $\X$,
  such that $M'\cap N'\subset V=j_M(U_M \setminus K_M)\cap j_N(U_N \setminus K_M)$,
  with $\partial M'$ and $\partial N'$ isotopic within $V$.
 \end{enumerate}
\end{definition}

\begin{figure}
\begin{center}
\begin{overpic}[scale=0.5]{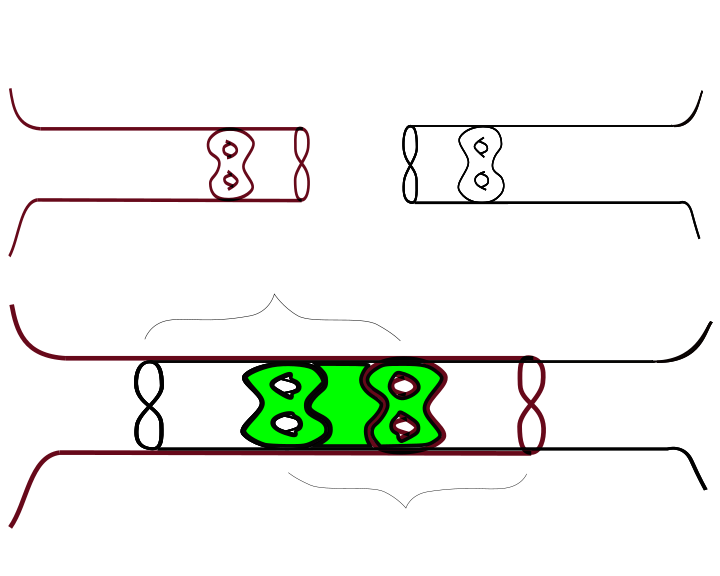}
\put(37,40){$U_N$}
\put(55,5){$U_M$}
\put(65,24){$\small{K_M}$}
\put(25,24){$\small{K_N}$}
\put(3,55){$M$}
\put(92,55){$N$}
\put(43,55){$B_M$}
\put(50,55){$B_N$}
\put(35,55){$\small{K_M}$}
\put(58.5,55){$\small{K_N}$}

\end{overpic}
\end{center}
\caption{The surgery $S$ along $B_M$ and $B_N$. We glue the two manifolds $M$ and $N$ through embeddings $j_M: M\setminus K_M\to S$ and $j_N: N\setminus K_N\to S$.  The regions $K_N$ and $K_M$ are deleted and  the green region  $j_M(U_M\setminus K_M)\cap j_N(U_N\setminus K_N)$ contains the identifications defined by the surgery on a tubular neighborhood of a genus two surface.}\label{surgery}
\end{figure}

This definition directly implies that
a flag surgery $S$ of $M$ and $N$
along $B_M$ and $B_N$
is diffeomorphic to a surgery of $M$ and $N$ along some diffeomorphism
$f\colon \partial H_M\to \partial H_N$,
with $H_M\subset U_M$ a genus two handlebody containing $B_M$ in its interior
and of boundary isotopic to $\partial K_M$ and likewise for $H_N$.
Definition \ref{definition-connectedsum} fulfills thus
our initial goal, namely to geometrically realize
topological surgeries along genus two handlebodies,
its condition (1) translating the compatibility
of the surgery with the structures of the two pieces.

\begin{remark}\label{remark-hypothesebouquets}
If the condition (2) of Definition \ref{definition-connectedsum}
asking for a flag embedding of a neighbourhood of the $\alpha-\beta$ bouquets
is absent in the conformal case,
it is simply because any open subset of $\R^n$ contains
 a conformally embedded euclidean ball,
 and thus so does
 any conformally flat manifold.
 This allows one to form conformal connected sums of any two conformally flat
 manifolds, while it is not clear that any
 $\alpha-\beta$ bouquet of circles in a flag manifold
 admits a neighbourhood embedding in $\X$.
\end{remark}

The necessary condition (2) of Definition
\ref{definition-connectedsum}
imposed on the $\alpha-\beta$ bouquets
is sufficient to
obtain the existence of the surgery,
given by Theorem \ref{theoremintro-sommeconnexe}
in  subsection \ref{subsection-ranktwo}
and proved in subsection \ref{subsection-preuvetheoremconnectedsum} below.

\subsection{Anti-flag involutions of the flag space}
\label{subsection-dualinvolution}
Denoting by $V^\bot$ the orthogonal of a vector subspace $V\subset\R^3$
for the standard euclidean quadratic form,
the \emph{standard involution}
\begin{equation}\label{equation-involutiondualeX}
 \kappa\colon
 (p,D)\in\X\mapsto(D^\bot,p^\bot)\in\X
\end{equation}
of the flag space will be used to glue flag structures,
replacing in this context
the inversion of the round sphere used
for conformal structures.
The standard inversion is easily seen to be equivariant
for the involutive morphism
\begin{equation}\label{equation-equivariancekappa}
  \Theta\colon g\in\PGL{3}\to \transp{g}^{-1}\in\PGL{3},
\end{equation}
 where $\transp{g}$ denotes the transpose
of the matrix $g$.
It also exchanges the $\alpha-$ and $\beta-$ foliations,
namely for any $x\in\X$:
\begin{equation}\label{equation-kappeechangealphabeta}
 \kappa(\Calpha(x))=\Cbeta(\kappa(x))
 \text{~and~}
 \kappa(\Cbeta(x))=\Calpha(\kappa(x)).
\end{equation}
\par Observe in particular that $\kappa$ is not an element
of the automorphism group $\PGL{3}$
of the flat path structure $(\Calpha,\Cbeta)$ of $\X$.
The group of automorphisms
of the \emph{unordered} pair $\{\Calpha,\Cbeta\}$ is actually generated
by $\PGL{3}$ and $\kappa$, and its two connected components are
$\PGL{3}$ and the left coset of \emph{anti-flag morphisms},
\emph{i.e.} diffeomorphisms of $\X$ exchanging
the $\alpha$ and $\beta$-foliations,
which are of the form $\kappa\circ g$ for $g\in\PGL{3}$.
Note moreover that
$\kappa$ is not a canonical involution of $\X$,
its choice being equivalent to the one of an euclidean quadratic form on $\R^3$.
The set $g\circ \kappa\circ g^{-1}$ of conjugates of $\kappa$ by elements
of $\PGL{3}$
is however canonical
(a change of quadratic form being indeed
equivalent to conjugating $\kappa$ by some $g$).
These conjugates are precisely the involutive anti-flag morphisms of $\X$,
that we will call more simply \emph{anti-flag involutions}.

\begin{remark}\label{remark-kappabouquets}
Note that for any $x\in\X$, $\kappa(B_{\alpha\beta}(x))$ is disjoint
from $B_{\alpha\beta}(x)$.
\end{remark}

\begin{remark}\label{remark-involutionsdimensionsup}
 The definition of $\kappa$ can be generalized to the higher dimensional
flag spaces
(see more details in \cite{Falbel-Wang}),
and the procedure of flag surgery
could be applied to higher dimensional structures.
In this paper we restrict ourselves to the three-dimensional case,
and the higher-dimensional \emph{Lagrangian-contact structures} or  \emph{complete flag structures}
will be considered in a future work.
\end{remark}

\begin{lemma}\label{lemma-echangecorpsenanses}
 Let $B\subset\X$
 be an $\alpha-\beta$ bouquet of circles and
 $H_2\subset \X$ be a genus two handlebody which is a
 neighbourhood of $B$.
 Then there exists
 a genus two handlebody $H_1$
 which is a neighbourhood of $B$,
 as close as one wants from $B$,
 and $g\in\PGL{3}$ such that with $\varphi=g^{-1}\circ\kappa\circ g$:
 \begin{enumerate}
  \item $H_1\subset\Int(H_2)$,
  \item $\varphi(H_1)=\X\setminus\Int(H_2)$,
  \item $\varphi(\partial H_1)=\partial H_2$,
  \item $\varphi(H_2\setminus\Int(H_1))=H_2\setminus\Int(H_1)$.
 \end{enumerate}
\end{lemma}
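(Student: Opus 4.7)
The plan is to produce $\varphi$ as a conjugate of $\kappa$ by a deep element of a loxodromic one-parameter subgroup whose repelling bouquet is $B$ and which is intertwined with $\kappa$ by the involution $\Theta$ of~\eqref{equation-equivariancekappa}. Setting $H_1\coloneqq\varphi(\X\setminus\Int(H_2))$ will make conditions~(2),~(3),~(4) automatic by the involutivity of $\varphi$, while the dynamics of Lemma~\ref{lemma-bouquetattractifrepulsif} will force $H_1$ into $\Int(H_2)$ and arbitrarily close to $B$ once the conjugating element is taken deep enough in the subgroup. Since $\PGL{3}$ acts transitively on $\alpha$-$\beta$ bouquets, we may conjugate the whole problem by an element $h\in\PGL{3}$ and assume that $B=\Balphabeta(o)$ is the standard bouquet at $o=([e_1],[e_1,e_2])$, recovering the general case by composing the conjugating element with $h^{-1}$.

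Consider the diagonal one-parameter subgroup $\{g^t\coloneqq\Diag(e^{-2t},1,e^{2t})\}_{t\in\R}$. For $t>0$, Example~\ref{example-matricediagonalebouquets} gives $\Bmoins(g^t)=B$, and the explicit formula~\eqref{equation-involutiondualeX} for $\kappa$ yields $\kappa(B)=\Bplus(g^t)$. Since each $g^t$ is symmetric, $\Theta(g^t)=g^{-t}$, so by~\eqref{equation-equivariancekappa} we get $\kappa\circ g^t=g^{-t}\circ\kappa$. Fix $T>0$ to be chosen later, set $g\coloneqq g^T$, and observe that
\[
\varphi\coloneqq g^{-1}\circ\kappa\circ g=g^{-2T}\circ\kappa
\]
is an involutive anti-flag diffeomorphism of $\X$.

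Let $K\coloneqq\X\setminus\Int(H_2)$: it is compact and disjoint from $B$, so $\kappa(K)$ is compact and disjoint from $\kappa(B)=\Bplus(g)=\Bmoins(g^{-1})$. Applying Lemma~\ref{lemma-bouquetattractifrepulsif} to the loxodromic element $g^{-1}$, every accumulation point of the family $(\varphi(K))_{T\to+\infty}=(g^{-2T}\kappa(K))_{T\to+\infty}$ lies in $\Bplus(g^{-1})=\Bmoins(g)=B$. For $T$ large enough we thus have $\varphi(K)\subset\Int(H_2)$ and $\varphi(K)$ contained in any prescribed neighborhood of $B$. Set $H_1\coloneqq\varphi(K)$: condition~(1) holds by construction and $H_1$ is as close to $B$ as desired, while the involutivity of $\varphi$ delivers $\varphi(H_1)=K$ (condition~2), $\partial H_1=\varphi(\partial H_2)$ hence $\varphi(\partial H_1)=\partial H_2$ (condition~3), and $\varphi(H_2)=\X\setminus\Int(H_1)$, from which
\[
\varphi(H_2\setminus\Int(H_1))=(\X\setminus\Int(H_1))\setminus(\X\setminus H_2)=H_2\setminus\Int(H_1),
\]
yielding condition~(4).

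The main obstacle is to verify that $H_1$ is itself a genus two handlebody. Since $\varphi$ is a diffeomorphism of $\X$, $H_1$ is diffeomorphic to $K=\X\setminus\Int(H_2)$, and the handlebody property of $H_1$ thus amounts to $H_2$ realizing one side of a genus two Heegaard splitting of $\X$. This is not automatic for an arbitrary handlebody neighborhood of $B$, but is guaranteed when $H_2$ is a regular neighborhood: the Schottky construction recalled in paragraph~\ref{soussection-Schottkyungenerateur} realizes $\X$ as the union of two genus two handlebodies glued along a surface $\Sigma$ transverse to the flow of $g^t$, one of them being a $g^t$-adapted regular neighborhood of $B$. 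Up to shrinking $H_2$ to such a $g^t$-adapted regular neighborhood of $B$ (which is contained in the original $H_2$ for $T$ large and which one may substitute without loss of generality in applications), the desired handlebody structure of $H_1$ follows.
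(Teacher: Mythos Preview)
Your approach coincides with the paper's: conjugate $\kappa$ by a large power of a loxodromic element having $B$ as repelling bouquet, set $H_1=\varphi(\X\setminus\Int(H_2))$, and let Lemma~\ref{lemma-bouquetattractifrepulsif} push $H_1$ into any prescribed neighbourhood of $B$; conditions (2)--(4) then drop out of the involutivity of $\varphi$ exactly as in the paper. Your choice of the \emph{symmetric} diagonal subgroup is a genuine simplification: since $\Theta(g^t)=g^{-t}$ you obtain $\varphi=g^{-2T}\kappa$ and need only one application of Lemma~\ref{lemma-bouquetattractifrepulsif} to the fixed compact $\kappa(K)$, whereas the paper, working with an unspecified loxodromic $g$, applies the lemma twice (first to $g$, then to $g^{-1}$), bridging the two steps through Remark~\ref{remark-kappabouquets}.

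Your final paragraph correctly isolates a point that the paper's own proof also leaves implicit: neither argument verifies that $H_1\cong\X\setminus\Int(H_2)$ is a genus two handlebody, nor that $B\subset\Int(H_1)$ (in your normalization the latter is equivalent to $\kappa(B)=\Bplus(g)$ being disjoint from $H_2$, which fails for a sufficiently large $H_2$). Your proposed fix, however, does not prove the lemma as \emph{stated}: once you shrink $H_2$, conditions (2)--(4) refer to the shrunken handlebody and no longer to the given one. Both your argument and the paper's become complete under the intended reading that $H_2$ is one of the flow-adapted tubular neighbourhoods of paragraph~\ref{soussection-Schottkyungenerateur}, i.e.\ one side of the genus-two Heegaard splitting of $\X$ produced there; the complement is then a handlebody containing $\Bplus(g)$. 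This is in any case the only instance used in the proof of Theorem~\ref{theoremintro-sommeconnexe}, where $H_2$ is not prescribed but chosen inside $U$.
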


\begin{figure}
\begin{center}
\begin{overpic}[scale=0.3]{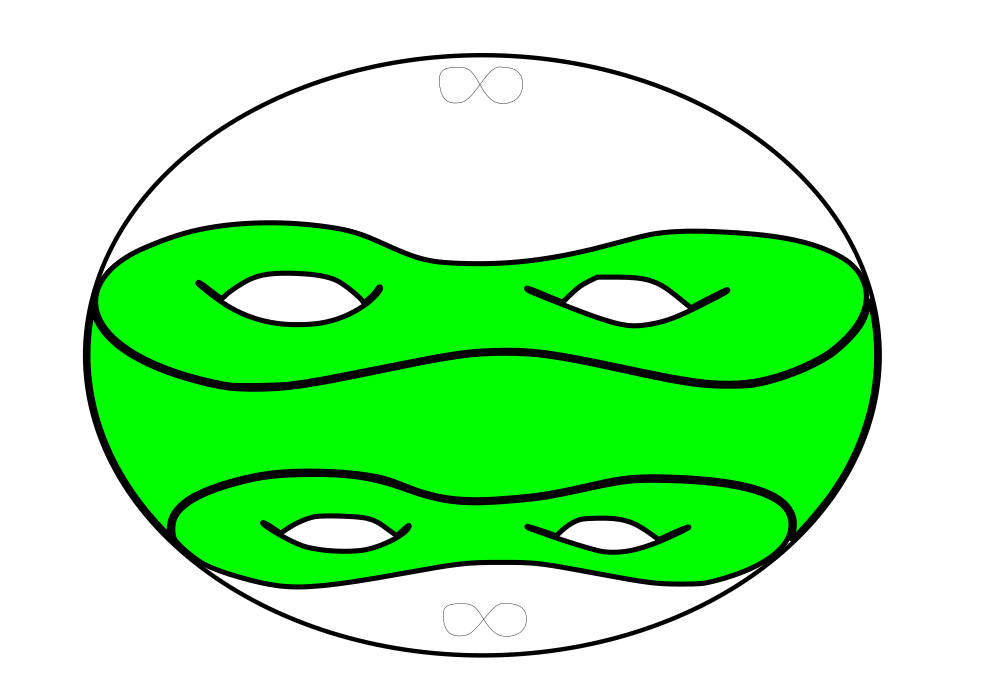}
\put(41,37){$\partial H_2$}
\put(43,13.2){$\partial H_1$}
\put(55,5){$B$}
\put(54,57){$\varphi(B)$}
\put(34,24){$\small{H_2\setminus\Int(H_1)}$}
\put(80,55){$\bf{X}$}
\end{overpic}

\end{center}
\caption{The green region is defined as the intersection of the handlebody $H_2$ and the complement of the handlebody $H_1$. It is  invariant under the map $\varphi=g^{-1}\circ\kappa\circ g$. The $\alpha-\beta$ bouquet is contained in the handlebody $H_1$.}
\label{lemma3.5}
\end{figure}

\begin{proof}
See Figure \ref{lemma3.5}. Let $B=\Calpha(x)\cup\Cbeta(x)$
 and $g$ be a loxodromic element in the stabilizer of $x$
 which has $B$ as repelling bouquet
 (see subsection \ref{soussection-dynamiqueX} for more details).
We will show below that with $\varphi_n=g^{-n}\circ\kappa\circ g^{n}$,
any accumulation point of
$H_1^n\coloneqq\X\setminus\varphi_n^{-1}(\Int(H_2))$
is contained in $B$.
For any neighbourhood $O$ of $B$,
there exists thus $n$ such that $H_1^n\subset O\cap\Int(H_2)$,
and the claims (1), (2) and (3) follow then directly for
$H_1=H_1^n$ and $\varphi=\varphi_n$.
Moreover $\Int(H_1)=\X\setminus\varphi^{-1}(H_2)$,
hence
$\varphi(H_2\setminus\Int(H_1))
=\varphi(H_2\cap\varphi^{-1}(H_2))
=\varphi(H_2)\cap H_2
=H_2\setminus\Int(H_1)$ since $\varphi$ is involutive.
Note that $H_1$ can be chosen in $O$, \emph{i.e.} as close
from $B$ as one wants. \\
For $n_k\to\infty$ such that
$\X\setminus g^{-n_k}\circ\kappa\circ g^{n_k}(\Int(H_2))$
converges to $K_\infty$,
it only remains to show that $K_\infty\subset B$.
Since the compact subset $\X\setminus\Int(H_2)$ is disjoint from
$B=\Bmoins(g)$,
any accumulation point of $g^{n}(\X\setminus\Int(H_2))$
is contained in $\Bplus(g)$ according
to Lemma \ref{lemma-bouquetattractifrepulsif},
\emph{i.e.} any accumulation point of
$\X\setminus\kappa\circ g^n(\Int(H_2))$ is contained in
$\kappa(\Bplus(g))$, which is disjoint from
$\Bplus(g)=\Bmoins(g^{-1})$.
Taking a subsequence, we can thus assume
that $\X\setminus\kappa\circ g^{n_k}(\Int(H_2))\subset P$
with $P\subset\X\setminus\Bmoins(g^{-1})$ a compact subset.
But according to Lemma \ref{lemma-bouquetattractifrepulsif} again,
any accumulation point of $g^{-n}(P)$ is then contained
in $\Bplus(g^{-1})=\Bmoins(g)=B$,
hence $K_\infty=\lim
g^{-n_k}(\X\setminus\kappa\circ g^{n_k}(\Int(H_2)))$
is contained in $B$, which concludes the proof of the Lemma.
\end{proof}

\subsection{Proof of Theorem \ref{theoremintro-sommeconnexe}}
\label{subsection-preuvetheoremconnectedsum}
Possibly taking smaller neighbourhoods
and composing with the action of some element of $\PGL{3}$,
there exists an $\alpha-\beta$ bouquet of circles $B=\Calpha(x)\cup\Cbeta(x)\subset\X$,
connected neighbourhoods $U\subset\X$ of $B$,
$U_M\subset M$ of $B_M$, $U_N\subset N$ of $B_N$,
and flag isomorphisms
$\phi_M\colon U_M\to U$
and $\phi_N\colon U_N\to\kappa(U)$.
Hence $\phi_N'\coloneqq\kappa\circ\phi_N$ is a diffeomorphism from $U_N$ to $U$.
According to Lemma \ref{lemma-echangecorpsenanses},
there exists two genus two handlebodies $H_1$ and $H_2\subset U$
which are neighbourhoods of $B$ satisfying
 $H_1\subset \Int(H_2)$, and $g\in\PGL{3}$
such that $\varphi=g^{-1}\circ\kappa\circ g$ preserves $H_2\setminus\Int(H_1)$
and exchanges $\partial H_1$ and $\partial H_2$.
We now introduce the manifold
\begin{equation}\label{equation-collageH1H2}
 S\coloneqq
 (M\setminus\phi_M^{-1}(H_1))\sqcup
 (N\setminus\phi_N'^{-1}(H_1))/
 \{\forall x\in\Int(H_2)\setminus H_1:
 \phi_M^{-1}(x)\sim\phi_N'^{-1}(\varphi(x))\},
\end{equation}
together with the natural embeddings $j_M$ and $j_N$
of $M\setminus\phi_M^{-1}(H_1)$
and $N\setminus\phi_N'^{-1}(H_1)$ in $S$.
According to the equivariance of $\kappa$
(see \eqref{equation-equivariancekappa}),
$\phi_N'^{-1}\circ\varphi\circ\phi_M$
equals $\phi_N^{-1}\circ \transp{g}g\circ\phi_M$
and is thus a flag morphism
in restriction to $\phi_M^{-1}(\Int(H_2)\setminus H_1)$.
In other words, the equivalence relation $\sim$ defining $S$
 preserves the flag structures of the open subsets
$\phi_M^{-1}(\Int(H_2)\setminus H_1)$
and $\phi_N'^{-1}(\Int(H_2)\setminus H_1)$
of $M$ and $N$.
\par Therefore, the union of the
$(\PGL{3},\X)$-atlases of
$j_M(M\setminus\phi_M^{-1}(H_1))$
and $j_N(N\setminus\phi_N^{-1}(H_1))$
defines a $(\PGL{3},\X)$-atlas on $S$,
which induces by definition the
\emph{canonical flag structure of $S$}.
Conversely, any flag structure on $S$
for which $j_M$ and $j_N$ are $(\PGL{3},\X)$-morphisms
has to coincide with this specific flag structure.
$S$ is a surgery of $M$ and $N$ along $B_M$ and $B_N$,
which concludes the proof of Theorem \ref{theoremintro-sommeconnexe}.
\begin{remark}\label{remark-choix}
 Let us emphasize that in the procedure described previously,
the choice of open subsets used to form the surgery along a given pair of $\alpha-\beta$
bouquets is non-unique.
The investigation of these distinct flag structures will be the subject of a subsequent work.
\end{remark}

\subsection{Kleinian flag manifolds by surgery}\label{subsubsection-Kleinianexamples}
Using the surgery procedure introduced in the previous  subsection
we can prove,
using the same argument as in
\cite[\S 5.6]{kulkarni_uniformization_1986}, the
following result
(Theorem \ref{theoremintro-Kleinianconnectedsum}
of the introduction)
yielding Kleinian flag structures.
We point out a related work in
\cite{dey_combination_2019,dey_klein-maskit_2023}
where combination results generalizing the Klein-Maskit theorem
were proved for Anosov subgroups,
and to which Theorem \ref{theorem-Kleinian-connectedsum} below
gives a concrete geometric interpretation when the considered Kleinian flag manifolds
are quotients of Anosov subgroups.

\begin{theorem}\label{theorem-Kleinian-connectedsum}
 A flag surgery of Kleinian flag manifolds is
 a Kleinian flag structure.
\end{theorem}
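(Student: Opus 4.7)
The plan is to build Kleinian data $(\Gamma_S,\Omega_S)$ for the surgery $S$ by explicitly combining those of $M=\Gamma_M\backslash\Omega_M$ and $N=\Gamma_N\backslash\Omega_N$, following the blueprint of Kulkarni's conformal combination \cite{kulkarni_uniformization_1986} but with the crucial gluing element supplied by the surgery construction itself. Indeed the proof of Theorem \ref{theoremintro-sommeconnexe} in Section \ref{subsection-preuvetheoremconnectedsum} shows that, once read in the $\X$-picture, the identification of $M$ and $N$ on their overlap region is realized by the flag morphism $h\coloneqq\transp{g}g\in\PGL{3}$, extracted from the factorization $\phi_N'^{-1}\circ\varphi\circ\phi_M=\phi_N^{-1}\circ(\transp{g}g)\circ\phi_M$. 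This element will play the role of the inversion used by Kulkarni.

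After conjugating in $\PGL{3}$ (which is harmless since Kleinian data are only defined up to conjugacy), I would arrange that there exist lifts $\tilde K_M\subset\Omega_M$ and $\tilde K_N\subset\Omega_N$ of the handlebodies with $\tilde K_M=\phi_M(K_M)$ and $\tilde K_N=\phi_N(K_N)$, using the Liouville-type rigidity of Proposition \ref{proposition-Liouville} to align the local Kleinian sections with the surgery embeddings $\phi_M,\phi_N$. A direct computation yields $h^{-1}=\varphi\circ\kappa$, so Lemma \ref{lemma-echangecorpsenanses} gives $h^{-1}\tilde K_N=\varphi(H_1)=\X\setminus\Int H_2$, which is disjoint from $\tilde K_M=H_1$ and separated from it by the collar $H_2\setminus\Int H_1$. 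Replace the Kleinian data of $N$ by its $h$-transport $\Gamma_N^{*}\coloneqq h^{-1}\Gamma_N h$ and $\Omega_N^{*}\coloneqq h^{-1}(\Omega_N)$, and set
\[
\Gamma_S\coloneqq\langle\Gamma_M,\Gamma_N^{*}\rangle\subset\PGL{3},\qquad \Omega_S\coloneqq\Gamma_S\cdot\bigl[(\Omega_M\setminus\Gamma_M\cdot\Int\tilde K_M)\cup(\Omega_N^{*}\setminus\Gamma_N^{*}\cdot\Int(h^{-1}\tilde K_N))\bigr]\subset\X.
\]

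It then remains to verify that $\Gamma_S$ is discrete, that it acts properly discontinuously on $\Omega_S$, and that the resulting quotient $\Gamma_S\backslash\Omega_S$ is flag-isomorphic to $S$. The last point follows by matching fundamental domains: the canonical embeddings $j_M,j_N$ of Definition \ref{definition-connectedsum} produce the desired isomorphism, and the flag structure is automatic since $\Omega_S$ is an open subset of $\X$. The first two points constitute the main obstacle and would be handled by a topological Klein-Maskit combination / ping-pong argument: the nested handlebodies $\tilde K_M$ and $h^{-1}\tilde K_N$ sit on opposite sides of the common separating surface $\partial H_1\cong\partial H_2$, so any non-trivial alternating word in the free product $\Gamma_M * \Gamma_N^{*}$ (or in the appropriate amalgamation over the boundary-surface stabilizer) carries one complementary domain into the interior of the other, precluding non-trivial stabilizers in $\Omega_S$ and forcing the prescribed presentation of $\Gamma_S$. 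The subtle point, in contrast to the rank-one conformal setting where a loxodromic element has point attractors and the ping-pong is routine, is that here the attractors of $\PGL{3}$ on $\X$ are one-dimensional $\alpha$-$\beta$ bouquets of circles (cf. Lemma \ref{lemma-bouquetattractifrepulsif}); one must therefore propagate the ping-pong conditions through thickened handlebody neighborhoods, invoking Lemma \ref{lemma-echangecorpsenanses} to control the behavior of all iterates of elements of $\Gamma_M$ and $\Gamma_N^{*}$.
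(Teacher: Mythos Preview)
Your overall strategy is the same as the paper's: both follow Kulkarni's conformal blueprint, conjugate $\Gamma_N$ by the gluing element coming from the surgery, and then run a Klein--Maskit/ping-pong argument to show that $\Gamma_S=\langle\Gamma_M,\Gamma_N^{*}\rangle$ is a free product acting properly discontinuously on a tree-like open set $\Omega_S\subset\X$. Your identification of the gluing element $h=\transp{g}g\in\PGL{3}$ and the computation $h^{-1}\tilde K_N=\varphi(H_1)=\X\setminus\Int H_2$ are correct and match the surgery construction of \S\ref{subsection-preuvetheoremconnectedsum}. The paper chooses slightly different bookkeeping: instead of $h$, it works with the anti-flag involutions $\varphi_i$ (each swapping the inside and outside of $U_i$) and the anti-flag morphism $g^{-1}\varphi_2$, and it builds the domain $O$ by an explicit recursive tree-gluing rather than as a $\Gamma_S$-orbit. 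Both formulations give the same Kleinian model.

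Where your proposal goes astray is in the last paragraph. The ping-pong here has nothing to do with loxodromic dynamics of elements of $\Gamma_M$ or $\Gamma_N^{*}$: these are arbitrary Kleinian groups, their elements need not be loxodromic, and Lemmas~\ref{lemma-bouquetattractifrepulsif} and~\ref{lemma-echangecorpsenanses} say nothing about their iterates. Those lemmas are invoked \emph{once}, in the surgery construction, to produce the handlebodies $H_1\subset H_2$ and the involution $\varphi$; after that they play no further role. The actual ping-pong input is purely combinatorial: since $U_M$ embeds in $M$, the lift $\overline{\tilde K_M}$ sits inside a fundamental domain for $\Gamma_M$ (the paper cites \cite{kapovich_note_2023} for the existence of such domains), so its $\Gamma_M$-translates are pairwise disjoint, and likewise for $\tilde K_N$ in $\Omega_N$. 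The map $h^{-1}$ then sends $\Omega_N'\coloneqq\Omega_N\setminus\Gamma_N\overline{\tilde K_N}$ into $\Int H_2$ (because $h^{-1}(\tilde K_N)=\X\setminus\Int H_2$), producing the nesting that makes the alternating-word argument go through exactly as in the rank-one case. There is no new rank-two subtlety at this stage, and no ``amalgamation over the boundary-surface stabilizer'': the combination is a genuine free product $\Gamma_M*\Gamma_N^{*}$.
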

\begin{proof}\label{label-proofKleiniangluing}
Let $\Gamma_1$ and $\Gamma_2$ be the holonomy groups
of two Kleinian flag manifolds $M_1=\Gamma_1 \backslash\Omega_1$ and
$M_2=\Gamma_2 \backslash\Omega_2$ (which we can assume to be connected without loss of generality),
$\Omega_1$ and $\Omega_2$ being connected open subsets of $\X$
where $\Gamma_1$ and $\Gamma_2$ act freely and properly discontinuously.
We will say that
an open connected set $D\subset\Omega$ is
a \emph{fundamental domain} for the action of $\Gamma$
on $\Omega$ if it coincides with the interior of its closure,
is disjoint from its translates by any non-trivial element of $\Gamma$,
 $\Omega\subset \Gamma\cdot \bar{D}$
and for any compact subset $K\subset\Omega$:
$\enstq{\gamma\in\Gamma}{\gamma \bar{D}\cap K\neq\varnothing}$ is finite.
The actions of each $\Gamma_i$ on $\Omega_i$ admit a fundamental domain $D_i$ (see, for instance,  \cite[Theorem 25]{kapovich_note_2023}).
\par For $i=1,2$,
let $U_i\subset\Omega_i$ be the neighbourhood of an $\alpha-\beta$ bouquet $B_i$
which embedds in $M_i$,
\emph{i.e.} such that $\overline{U_{i}}\subset D_i$,
and so that $S$ is the flag surgery along $U_1$ and $U_2$
in the following sense.
Using notation as in Definition \ref{definition-connectedsum},
we assume that $\partial j_{M_1}(\mathcal{U}_1)=\partial j_{M_2}(\mathcal{U}_2)$
and that $S=j_{M_1}(M_1\setminus\mathcal{U}_1)\cup j_{M_2}(M_2\setminus\mathcal{U}_2)$,
with $\mathcal{U}_i$ the projection of $U_i$ in $M_i$
(in other words as in  Definition \ref{definition-connectedsum},
$\mathcal{U}_i$ is an open set contained in $U_{M_i}$, containing $K_{M_i}$
and of boundary isotopic to $\partial K_{M_i}$).
We can assume, without loss of generality, the $U_i$ to coincide with the interior of their closure,
and that
there exists $g\in\PGL{3}$ such that
$U_2=g(U_1)$,
and anti-flag involutions $\varphi_1$ and $\varphi_2=g\varphi_1g^{-1}$
such that $\varphi_i(\partial U_i)=\partial U_i$
and $\varphi_i(\X\setminus\overline{U_i})=U_i$
(see  subsection \ref{subsection-preuvetheoremconnectedsum} for more details).
We emphasize that $\varphi_1$ and $\varphi_2$ are conjugate because any two anti-flag involutions
are conjugate ( see subsection \ref{subsection-dualinvolution}).
\par For $i=1,2$, $\Gamma_i$ acts freely and properly discontinuously on
\begin{equation}\label{equation-omegaiprime}
 \Omega_i'\coloneqq\Omega_i\setminus  \bigcup_{\gamma\in \Gamma_i}(\gamma\overline{U_{i}})
\subset\X
\end{equation}
with fundamental domain $D_i\setminus \overline{U_i}$,
and we will construct recursively an open subset $O$ of $\X$
as a tree-like gluing of $\Omega_1'$ and $\Omega_2'$
thanks to the involutions $\varphi_i$.
For the first step, we glue to $O_1\coloneqq\Omega_1'$ a copy of $\Omega_2'$
by attaching them through $g^{-1}\varphi_2=\varphi_1 g^{-1}$
on the components $\partial U_1$ and $\partial U_2$
of their boundaries.
Namely, since $\Omega_2'^{*}\coloneqq g^{-1}\varphi_2(\Omega_2')\subset U_1$
and $g^{-1}\varphi_2(\partial U_2)=\partial U_1$,
$O_2^{\id}=\Omega_1'\cup \overline{\Omega_2'^{*}}$
is an open subset of $\Omega_1$.
For any $\gamma\in\Gamma_1$,
we can attach on the same way the copy $\Omega_2^\gamma=\gamma(\Omega_2'^{*})\subset \gamma(U_1)$
of $\Omega_2'$
at the boundary component $\partial(\gamma U_{1})$
of $\Omega_1'$ to get an open set $O_2^{\gamma}$,
obtaining eventually the open subset $O_2\coloneqq \cup_{\gamma\in\Gamma_1}O_2^{\gamma}$
where each
``hole'' $\gamma\overline{U_{1}}$ has been ``filled in'' with the corresponding copy $\Omega_2^\gamma$
of $\Omega_2'$.
Note that $g^{-1}\varphi_2(U_2)=\X\setminus\overline{U_1}$
and thus with $\Omega_2^*\coloneqq g^{-1}\varphi_2(\Omega_2)$,
$\Omega_1'\subset\Omega_2^*$.
Therefore $\overline{O_1}\subset O_2$, $\overline{O_2}\subset\Omega_1\cap\Omega_2^*$
and $O_2\setminus\overline{O_1}=\cup_{\gamma\in\Gamma_1}\Omega_2^\gamma$
with $\Omega_2^\gamma\subset \gamma(U_1)$.
\par In the second step for any $\gamma_1\in\Gamma_1$ and $\gamma_2\in\Gamma_2\setminus\{\id\}$,
let $U_2(\gamma_1,\gamma_2)=\gamma_1g^{-1}\varphi_2\gamma_2(U_2)$
be the copy of $U_2$ contained in $\gamma_1(U_1)\setminus\Omega_2^{\gamma_1}$
and corresponding to $\gamma_2(U_2)$.
As we previously did for the copies of $\Omega_2'$ glued to $\Omega_1'$,
we can now glue to each created boundary component
$\partial U_2(\gamma_1,\gamma_2)$
of $O_2$ the suitable copy $\Omega_1^{\gamma_1,\gamma_2}$
of $\Omega_1'$ on $\partial U_1$ through $\varphi_1$.
More precisely, as before $\Omega_1'^*\coloneqq g\varphi_1(\Omega_1')\subset U_2$
and $\Omega_2'\cup\overline{\Omega_1'^*}\subset\Omega_2$ is open,
hence with $\Omega_1^{\gamma_1,\gamma_2}\coloneqq(\gamma_1g^{-1}\varphi_2\gamma_2)(\Omega_1'^*)$,
$O_3^{\gamma_1,\gamma_2}\coloneqq O_2\cup\Omega_1^{\gamma_1,\gamma_2}\subset\Omega_1\cap\Omega_2^*$
is open.
This leads to an open subset $O_3$ such that $\overline{O_2}\subset O_3$,
$\overline{O_3}\subset\Omega_1\cap\Omega_2^*$
and $O_3\setminus\overline{O_2}=\cup\Omega_1^{\gamma_1\gamma_2}$,
the union being taken on all $\gamma_1\in\Gamma_1$
and $\gamma_2\in\Gamma_2\setminus\{\id\}$.
We can then continue this procedure recursively
to obtain an increasing sequence $O_n$ of open sets,
and eventually define
$O=\cup_n O_n$ which is an open subset of $\Omega_1\cap\Omega_2^*$.
\par We now introduce the group $\Gamma_2'=(g^{-1}\varphi_2)\Gamma_2(g^{-1}\varphi_2)^{-1}$,
and emphasize that $\Gamma_2'\subset\PGL{3}$
since $\varphi_2=g\varphi_1g^{-1}$ is involutive and equivariant.
Since $(g^{-1}\varphi_2)^{-1}=\varphi_2g=g\varphi_1$,
$O$ is by construction $\Gamma_1$- and $\Gamma_2'$-invariant.
 Let us denote by
$O^{i}\coloneqq\cup_{k\equiv i [2]}O_k\setminus \overline{O_{k-1}}$
the ``$\Omega_i'$-part'' of $O$ (where $O_0=\varnothing$).
Then $\Gamma_1$ (resp. $\Gamma_2'$) preserves $O^{1}$
(resp. $O^{2}$).
Since $\Gamma_1$ (respectively $\Gamma_2'$) acts freely and properly discontinuously
on $\Omega_1$ (resp. on $\Omega_2^{*}$)
and $O\subset \Omega_1\cap \Omega_2^{*}$,
both groups act freely and properly discontinuously on $O$.
For the same reason, $\Gamma_1$ (respectively $\Gamma_2'$)
acts freely and properly discontinuously
on  $O^{1}$ (resp. on $O^{2}$),
and $\Gamma_1\backslash O^{1}$
(resp. $\Gamma_2\backslash O^{2}$)
is moreover canonically identified
with $M_1\setminus\overline{\mathcal{U}_1}$ (resp. $M_2\setminus\overline{\mathcal{U}_2}$)
with $\mathcal{U}_i$ the projection of $U_i$ in $M_i$.
\par A standard ping-pong-like argument shows now that the group
$\Gamma$ generated by $\Gamma_1$ and $\Gamma_2'$
is a free product of $\Gamma_1$ and $\Gamma_2'$, and
acts freely and properly discontinuously on $O$.
Indeed using the notation introduced previously,
for any $(\gamma_1,\gamma_2)\in\Gamma_1\times\Gamma_2$:
$\Omega_2^{\gamma_1}=\gamma_1(\Omega_2'^*)$ and
$\Omega_1^{\gamma_1,\gamma_2}=\gamma_1\gamma_2^*(\Omega_1')$
with $\gamma_2^*\coloneqq(g^{-1}\varphi_2)\gamma_2(g^{-1}\varphi_2)^{-1}$
(observe that $\Omega_1^{\gamma_1,\id}=\Omega_1'$).
More generally, continuing the recursive construction described above
with analog notations,
for any reduced word $w=g_1g_2\dots g_n$
whose letters are alternatively in $\Gamma_1$ and $\Gamma_2$,
and denoting by $\bar{w}$ the image of $w$ in $\Gamma$ obtained by replacing any $\gamma\in\Gamma_2$
by $\gamma^*$, we obtain
$\Omega_2^w=\bar{w}(\Omega_2'^*)$
and $\Omega_1^w=\bar{w}(\Omega_1')$.
The key-remark is now that by the very construction of $O$,
for any two distinct reduced words $w$ and $w'$ the subsets
$\Omega_1^w$, $\Omega_1^{w'}$, $\Omega_2^w$ and $\Omega_2^{w'}$
are pairwise disjoint.
This fact allows us to conclude,
by the same argument than the usual ping-pong lemma,
that the map $w\mapsto \bar{w}\in\Gamma$
sending a reduced word to its image induces an isomorphism between
the free product $\Gamma_1\star\Gamma_2$ and $\Gamma$,
and that the action of $\Gamma$ on $O$ is free and properly discontinuous.
\par In the end $M\coloneqq\Gamma\backslash O$ is a Kleinian flag manifold containing a copy
$M_i'$ of $M_i\setminus\overline{\mathcal{U}_i}$ and
such that $M=\overline{M_1'}\cup\overline{M_2'}$,
\emph{i.e.} $M$ is flag isomorphic to $S$ which concludes the proof.
\end{proof}

Note that the use of anti-flag involutions is crucial in the construction
of the open domain of discontinuity $O$ in the previous proof.
\par We now construct using Theorem \ref{theoremintro-sommeconnexe}
a large family of examples of Kleinian flag manifolds.
We use in the statement below the notation
from \cite{mion-mouton_geometrical_2022}.
\begin{proposition}\label{proposition-bouquetsSchottky}
Let $\Gamma\subset\PGL{3}$ be a Schottky subgroup of rank d
of separating handlebodies $\{H^-_i,H^+_i\}_{i=1}^d$.
 Let $M=\Gamma\backslash\Omega(\Gamma)$ be the associated Schottky flag manifold.
 Then for any $\alpha-\beta$ bouquet of circles $B$ contained in
 the fundamental domain $\X\setminus\cup_{i=1}^d(H^-_i\cup H^+_i)$,
 $B$ admits a neighbourhood which embedds in $M$.
 Theorem \ref{theoremintro-sommeconnexe} can thus be applied to
 glue $M$ along the resulting $\alpha-\beta$ bouquet of circles
 $B_M$ in $M$.
\end{proposition}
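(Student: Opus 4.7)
The plan is to leverage the defining ping-pong property of the Schottky fundamental domain $D = \X\setminus\cup_{i=1}^d(H_i^-\cup H_i^+)$ to promote a small neighbourhood of $B$ into an open set that injects into $M$ under the canonical projection $\pi_\Gamma\colon\Omega(\Gamma)\to M$.

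First I would observe that $D$ is an open subset of $\X$, since the separating handlebodies $H_i^{\pm}$ are by construction closed in $\X$, and that $D$ is moreover contained in the domain of discontinuity $\Omega(\Gamma)$. Because $B$ is compact and contained in the open set $D$, there exists an open neighbourhood $U$ of $B$ with $\overline{U}\subset D$; one may for instance take $U$ to be a tubular neighbourhood of the two circles composing $B$ of radius sufficiently small (for any background Riemannian metric on $\X$).

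Next I would invoke the ping-pong property of the Schottky subgroup $\Gamma$ with respect to the handlebodies $\{H_i^{\pm}\}$, as recalled in \cite{mion-mouton_geometrical_2022}: for every $\gamma\in\Gamma\setminus\{\id\}$, the translate $\gamma(D)$ is disjoint from $D$ (indeed $\gamma(D)$ is contained in one of the $H_i^{\pm}$, while $D$ avoids all of them). Since $U\subset D$, this gives $\gamma(U)\cap U=\varnothing$ for every non-trivial $\gamma\in\Gamma$. The restriction of the covering map $\pi_\Gamma$ to $U$ is therefore injective, and being a local diffeomorphism it is thus a smooth embedding into $M$. As $U$ is an open subset of $\X$ and $\pi_\Gamma$ is by construction a flag morphism, $\pi_\Gamma(U)$ is a neighbourhood of $B_M\coloneqq \pi_\Gamma(B)$ in $M$ which is flag isomorphic to an open subset of $\X$, as required by the hypotheses of Theorem \ref{theoremintro-sommeconnexe}.

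Finally, applying Theorem \ref{theoremintro-sommeconnexe} to $M$ (taking both copies equal to $M$, with bouquets $B_M$ obtained as above, possibly from two distinct bouquets $B,B'\subset D$ to which the preceding argument applies identically) yields the desired self-surgery. I do not anticipate a serious obstacle here: the essential content is simply that any compact set in the interior of a Schottky fundamental domain injects into the quotient, and the main care needed is in matching the terminology of ``fundamental domain'' from \cite{mion-mouton_geometrical_2022} with the embedding condition from Definition \ref{definition-connectedsum}.
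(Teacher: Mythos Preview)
Your proposal is correct and follows essentially the same approach as the paper: both arguments rest on the fact that $D=\X\setminus\cup_i(H_i^-\cup H_i^+)$ is a fundamental domain for the $\Gamma$-action on $\Omega(\Gamma)$, so that the canonical projection restricted to $D$ (or to any open $U\subset D$ containing $B$) is a flag embedding. The paper simply cites this fundamental-domain property from \cite[Theorem D]{mion-mouton_geometrical_2022}, whereas you unpack it via the ping-pong disjointness $\gamma(D)\cap D=\varnothing$; this is the same content, just made more explicit.
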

\begin{proof}
 This follows from the fact that $D=\X\setminus\cup_i(H^-_i\cup H^+_i)$
  is a fundamental domain for the action of $\Gamma$ on $\Omega(\Gamma)$
  according to \cite[Theorem D]{mion-mouton_geometrical_2022}.
  The restriction of the canonical projection $\Omega(\Gamma)\to M$
  to $D\supset B$ is thus a flag embedding,
  which proves the claim.
\end{proof}

\begin{remark}\label{remark-bouquetspasfermescompactification}
Observe that not every $\alpha$ (respectively $\beta$) leaf of a
Schottky flag manifold is closed.
For instance, the examples of \cite[\S 4.4]{mion-mouton_geometrical_2022}
are isomorphic up to a finite index to Schottky flag manifolds,
and are compactifications of $\Fiunitan{\Sigma}$ with $\Sigma$
a non-compact hyperbolic surface.
Here $\Fiunitan{\Sigma}$ is endowed with a natural flag structure whose $\alpha$
and $\beta$ leaves are the stable and unstable horocycle of the geodesic flow
(see \cite[\S 1 and Lemma 4.3]{mion-mouton_partially_2022}).
They are thus not closed, and \cite[Proposition 4.9]{mion-mouton_geometrical_2022} ensures
that some of them remain unclosed in the compactification.
\end{remark}

\begin{corollary}\label{corollary-newKleinianexamples}
 Let $M$ and $N$ be two Schottky flag manifolds,
 and $B_M$, $B_N$ be two $\alpha-\beta$ bouquets of circles
 contained in the respective fundamental domains of $M$ and $N$.
 Then there exists a flag surgery of $M$ and $N$
 along $B_M$ and $B_N$,
 which is a Kleinian flag manifold.
\end{corollary}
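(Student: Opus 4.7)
The plan is to simply combine three results already established earlier in the paper: Proposition \ref{proposition-bouquetsSchottky}, Theorem \ref{theoremintro-sommeconnexe}, and Theorem \ref{theorem-Kleinian-connectedsum}. There is essentially no new content to prove; the only thing to verify is that the hypotheses of each invoked result are satisfied in turn.

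First, I would apply Proposition \ref{proposition-bouquetsSchottky} to both $M$ and $N$ separately. Since $B_M$ lies in the fundamental domain $D_M=\X\setminus\cup_i(H^-_i\cup H^+_i)$ for $M$ (and analogously for $N$), the proposition provides a neighbourhood $U_{B_M}$ of $B_M$ that embeds in $M$ via the projection $\Omega(\Gamma_M)\to M$. Crucially, this neighbourhood can be taken to lie inside $D_M\subset\X$, which gives automatically a flag embedding of $U_{B_M}$ into $\X$ (and similarly for $B_N$). This is exactly the hypothesis of Theorem \ref{theoremintro-sommeconnexe}.

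Next, invoking Theorem \ref{theoremintro-sommeconnexe} then yields the existence of a flag surgery $S$ of $M$ and $N$ above $B_M$ and $B_N$. Finally, since both $M$ and $N$ are Kleinian flag manifolds by definition of Schottky flag manifolds (being quotients $\Gamma\backslash\Omega(\Gamma)$ of the discontinuity domain of a Schottky subgroup of $\PGL{3}$), we apply Theorem \ref{theorem-Kleinian-connectedsum} to conclude that $S$ is itself Kleinian, completing the proof.

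Since this is an assembly of existing results, no step presents a genuine obstacle; the only small point requiring care is making the compatibility between the two conditions explicit, namely that a neighbourhood of the bouquet contained in the fundamental domain is simultaneously a neighbourhood embedding in the quotient manifold (needed conceptually for "Kleinian flag manifold") and a neighbourhood embedding in $\X$ (needed as input to Theorem \ref{theoremintro-sommeconnexe}). Both follow from the fact that the canonical projection $\Omega(\Gamma)\to\Gamma\backslash\Omega(\Gamma)$ restricts to a flag embedding on the fundamental domain.
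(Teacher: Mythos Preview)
Your proof is correct and follows essentially the same approach as the paper, which simply cites Proposition \ref{proposition-bouquetsSchottky} and Theorem \ref{theorem-Kleinian-connectedsum}. Your explicit invocation of Theorem \ref{theoremintro-sommeconnexe} is already folded into the statement of Proposition \ref{proposition-bouquetsSchottky}, so the two arguments are the same up to how the references are packaged.
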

\begin{proof}
 This is a direct consequence of Proposition
 \ref{proposition-bouquetsSchottky}
 and Theorem \ref{theorem-Kleinian-connectedsum}.
\end{proof}

\begin{remark}\label{remark-nouveauxexemplesKleiniens}
 Comparing with \cite[Theorem C]{mion-mouton_geometrical_2022},
 we note that the flag surgery of two flag manifolds
 $M$ and $N$ along two $\alpha-\beta$ bouquets,
 is related to the procedure of removing two bouquets $B_1$
 and $B_2$ from the \emph{same} flag manifold
 and gluing together neighbourhoods of these bouquets.
 Comparing with topological surgeries,
 the former procedure is a flag structure realization
 of an amalgated product,
 while the latter one is analog to an HNN-extension.  In particular, one can obtain Schottky 
 flag manifolds by surgery of several of the examples $M_0$ (see subsection \ref{soussection-Schottkyungenerateur}).
 \end{remark}

\section{New non-Kleinian flag structures}
The goal of this  section is to construct new examples of non-Kleinian flag manifolds.
We describe in  subsection \ref{subsection-deformationstriviales}
a first very specific family of such examples, obtained as deformations of cyclic Schottky
flag manifolds on $\Sigma_2\times\Sn{1}$.
We use then surgeries to give a general recipe producing non-Kleinian examples,
and conclude in  subsection \ref{subsection-proofnonKleiniangluings}
the proof of Theorem \ref{theoremintro-structuresnonKleiniennesparsommeconnexe}.

\subsection{First example: deformations of the flag structure on $M$}
\label{subsection-deformationstriviales}
The first examples of non-Kleinian flag structures arise as deformations of the examples of subsection \ref{soussection-Schottkyungenerateur} which have infinite cyclic holonomy.
These examples were suggested to us by C.  Frances 
and are analogous to the construction of
affine structures on the torus with non-discrete holonomy
(\cite[\S 6 p. 79]{gunning_special_1967}).
Consider a loxodromic element $g\in\PGL{3}$
as in subsection  \ref{soussection-Schottkyungenerateur}.
We let again  $x_-,x_+\in\X$ be the   repelling and  attracting points
and $B^-=B_{\alpha\beta}(x_-)$, $B^+=B_{\alpha\beta}(x_+)$
the associated $\alpha-\beta$ bouquet of circles.
Define $\Omega=\X\setminus(B^-\cup B^+)$ and
$\Gamma\coloneqq\langle g \rangle$.
Then $M_0=\Gamma\backslash\Omega$ (see subsection \ref{soussection-Schottkyungenerateur}) is a Kleinian flag manifold diffeomorphic to
$\Sigma_2\times\Sn{1}$.
\begin{proposition}\label{proposition-deformationsM}
 There exists on $M_0$
 distinct
 pairwise non-isomorphic flag structures,
 which are continuous deformations of the Kleinian structure of $\Gamma\backslash\Omega$,
 but which are not virtually Kleinian.
\end{proposition}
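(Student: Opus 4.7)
The plan is to deform $\rho_{M_0}$ by allowing $\piun{\Sigma_2}$ to map non-trivially into the two-dimensional centralizer of $g$ in $\PGL{3}$, and to realize the resulting representations as holonomies of flag structures on $M$ via the Ehresmann-Thurston principle.

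\par Recall that $\piun{M}\simeq\piun{\Sigma_2}\times\Z$ and $\rho_{M_0}(\lambda,n)=g^n$ (see \eqref{equation-rhoM0}), so $\rho_{M_0}$ sends $\piun{\Sigma_2}$ trivially. Since $g$ is loxodromic with three distinct real eigenvalues, the identity component $A$ of its centralizer in $\PGL{3}$ is the positive diagonal Cartan subgroup, abelian and isomorphic to $\R^2$, and contains $g$. For any group morphism $\phi\colon\piun{\Sigma_2}\to A$ (which necessarily factors through $H_1(\Sigma_2,\Z)\simeq\Z^4$ by abelianity of $A$),
\begin{equation*}
\rho_\phi\colon(\lambda,n)\in\piun{\Sigma_2}\times\Z\longmapsto\phi(\lambda)g^n\in\PGL{3}
\end{equation*}
is a well-defined group morphism because $A$ is abelian and contains $g$, and the family $\{\rho_\phi\}$, naturally parametrized by $\Hom(\Z^4,A)\simeq\R^8$, reduces to $\rho_{M_0}$ at $\phi=0$. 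The Ehresmann-Thurston principle then provides, for any $\phi$ in a small neighbourhood of $0$, a flag structure $\Sm_\phi$ on $M$ with holonomy $\rho_\phi$, depending continuously on $\phi$.

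\par To obtain the non-virtually-Kleinian property, I will choose $\phi$ arbitrarily close to $0$ such that $\phi(\piun{\Sigma_2})$ is dense in $A\simeq\R^2$, which is possible because $\R^2$ admits dense finitely generated subgroups with generators of arbitrarily small norm. The holonomy group $\rho_\phi(\piun{M})\subset\PGL{3}$ is then non-discrete, and so is its image $\rho_\phi(\Gamma')$ for any finite-index subgroup $\Gamma'\leq\piun{M}$ (a finite-index subgroup of a non-discrete subgroup of a Lie group cannot itself be discrete, being covered by finitely many translates of its would-be discrete core). Hence no finite cover of $(M,\Sm_\phi)$ can be Kleinian, so $(M,\Sm_\phi)$ is not virtually Kleinian.

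\par For pairwise non-isomorphism, I will use that two flag structures on $M$ are flag isomorphic only if their holonomies are conjugate in $\PGL{3}$ up to precomposition by an element of the mapping class group of $M$. Distinct $\phi$'s already give $\PGL{3}$-non-conjugate representations $\rho_\phi$: any $h\in\PGL{3}$ conjugating $\rho_\phi$ to $\rho_{\phi'}$ must centralize $g$, and hence lie in the centralizer of $A$, which coincides with $A$ and acts trivially by conjugation on $A$. Since the mapping class group of $M$ is countable, the uncountable $8$-dimensional family $\{\rho_\phi\}$ thus yields uncountably many pairwise non-isomorphic flag structures. The main step to verify carefully is that arbitrarily small $\phi\in\R^8$ can indeed be chosen with dense image in $A$, so that the Ehresmann-Thurston principle and the non-discreteness argument can be combined; this hinges on the density of generic $4$-tuples in $\R^2$, which is straightforward but essential.
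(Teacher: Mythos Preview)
Your proof is correct and follows essentially the same strategy as the paper: deform the holonomy by sending $\piun{\Sigma_2}$ into the centralizer of $g$, invoke the Ehresmann--Thurston principle, and use non-discreteness of the holonomy image to exclude finite Kleinian covers. The only variations are that the paper deforms merely into the one-parameter subgroup $\{g^t\}\simeq\R$ rather than the full two-dimensional Cartan subgroup, and distinguishes the resulting structures via their trace sets $\{\tr(\rho_\epsilon(\gamma)):\gamma\in\piun{M}\}$ rather than by your soft countability argument using the mapping class group.
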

\begin{proof}
The fundamental group of the quotient manifold $M_0$ is $\pi_1(M_0)=\pi_1(\Sigma_2)\times\Z$.
The holonomy map $\rho\colon\piun{M_0}\to \PGL{3}$ is defined on generators
$a_1,b_1,a_2,b_2, z$,
where $a_1,b_1,a_2,b_2$ are standard generators of $\pi_1(\Sigma_2)$
subject to the single relation
\[
[a_1,b_1][a_2,b_2]=\id,
\]
and
$z$ is the positive generator of $\Z$,
by $\rho(a_i)=\rho(b_i)=id$, $1\leq i\leq 2$, and $\rho(z)=g$.
Let  $x=(s_1,s_2,t_1,t_2)\in\R^4$
be any choice of four real numbers such that
$\Z s_1+\Z s_2+\Z t_1+\Z t_2$
is dense in $\R$.
Note that this choice can be made with
$\norme{x}_1=\abs{s_1}+\abs{s_2}+\abs{t_1}+\abs{t_2}$
as small as one wants.
Since the one-parameter group $\{g^t\}_{t\in\R}$ is abelian,
the relations
\begin{itemize}
 \item  $\rho_x(a_1)=g^{s_1} $ and $\rho_x(a_2)=g^{s_2}$,
 \item $\rho_x(b_1)=g^{t_1}$ and $\rho_x(b_2)=g^{t_2}$,
 \item $\rho_x(z)=g$,
\end{itemize}
define a unique morphism $\rho_x\colon\piun{M_0}\to\PGL{3}$.
According to the Ehresmann-Thurston principle (see \cite{thurston_three-dimensional_1997}
and \cite{canary_notes_1987}),
for any small enough $\norme{x}_1$,
$\rho_x$ is close enough to $\rho$
to be the holonomy morphism
of a flag structure $\Lm_x$ on $M_0$
which is close to the original Kleinian one.
In particular, for $\norme{x}_1$ small enough the resulting flag structure is
thus a continuous deformation of the original Kleinian structure of
$\Gamma\backslash\Omega$
(since the moduli space of $(G,X)$-structures is locally arwise-connected
for any homogeneous space $X$ under a connected Lie group $G$,
see for instance \cite[Deformation theorem p.178]{goldman_geometric_1988}).
Note furthermore that there exists choices
of $x$ with $\norme{x}_1$ as small as one wants, and
whose associated sets of traces
$\enstq{\tr(g^{js_1+ks_2+lt_1+mt_2+n})}{(j,k,l,m,n)\in\Z^5}$ are
pairwise distinct.
These sets, being conjugacy invariants  of the representation $\rho_x$ in $\PGL{3}$,
 are  invariants of the associated flag structures $\Lm_x$,
which are thus pairwise non-isomorphic.
Lastly, the holonomy group of any finite-index covering of the flag manifold
$(M_0,\Lm_x)$ is a finite-index subgroup of $\Image(\rho_x)$,
and is thus non-discrete since $\Image(\rho_x)$
is dense by assumption.
In particular, $\Lm_x$ has  no finite-index Kleinian covering.
\end{proof}

\subsection{A suitable covering}\label{soussection-suitablecovering}
Recall that $\Omega=\X\setminus(B^-\cup B^+)$
denotes the image of the developing map $\delta_{M_0}$
as in subsection \ref{soussection-Schottkyungenerateur}.
The quotient manifold is $M_0=\Gamma_0\backslash\Omega$,
where $\rho_0 (\pi_1(M_0))=\Gamma_0\coloneqq\langle g \rangle$
is the holonomy group, and $B^-$ and $B^+$ are the  repelling and  attracting bouquets of circles.
We also denote by $E\simeq\tilde{\Omega}$ the universal cover of $M_0$,
and by
$\delta_{M_0}\colon E\to\Omega=\delta_{M_0}(E)$ the developing
map of $M_0$ which is just the universal covering map of $\Omega$.
\par Let $B$ be a $\alpha-\beta$ bouquet of circles
disjoint from $B^-$ and $B^+$.
Then, possibly replacing $g$ by some big enough power
and $H$ by a smaller neighbourhood,
we can assume that $B$ is contained in the fundamental domain $D$.
There exists then an open neighbourhood $\mathcal{U}_0\subset\X$
of $B$ contained in $D$,
and with $\pi_{\Gamma}\colon\Omega\to M_0$ the canonical projection,
$\pi_{\Gamma}\restreinta_{\mathcal{U}_0}\colon\mathcal{U}_0\to
U_0\coloneqq\pi_{\Omega}(\mathcal{U}_0)$ is thus an embedding
of $\mathcal{U}_0$ in $M_0$ of image $U_0$,
neighbourhood of the $\alpha-\beta$ bouquet of circles $B_{M_0}=\pi_\Gamma(B)$.
\par The goal of this subsection is to show the following:
\begin{lemma}\label{lemma-revetementM}
 There exists a Galoisian order 2 covering $F\colon M\to M_0$
such that with $\pi_{M}\colon E\to M$ the universal covering map of $M$,
there exists a connected component $U$ of $F^{-1}(U_0)$ such that:
\begin{enumerate}
 \item $F\restreinta_{U}\colon U\to U_0$ is a diffeomorphism;
 \item $\delta_M(E\setminus\pi_{M}^{-1}(U))=\delta_M(E)=\Omega$.
\end{enumerate}
\end{lemma}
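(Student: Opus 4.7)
The plan is to construct $F\colon M\to M_0$ as the double cover associated to a suitable index-$2$ normal subgroup $\Lambda$ of $\piun{M_0} = \piun{\Sigma}\times\Z$, chosen so as to contain $\piun{U_0}$, which will ensure condition (1) automatically. Since $\mathcal{U}_0 \hookrightarrow \Omega$ identifies $\piun{U_0}\simeq \piun{\mathcal{U}_0}$ with a subgroup of $\piun{\Omega} = \piun{\Sigma}$, and since $\mathcal{U}_0$ is a genus-two handlebody neighborhood of the bouquet $B$ whose fundamental group is free of rank two, the image of $\piun{U_0}$ in the $\Z/2$-abelianization $H_1(\Sigma;\Z/2)\simeq(\Z/2)^4$ is generated by at most two elements. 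There exists then a surjective morphism $\piun{\Sigma}\twoheadrightarrow\Z/2$ vanishing on this image; letting $\Lambda_1\subset\piun{\Sigma}$ be its kernel gives an index-$2$ subgroup containing $\piun{U_0}$, and setting $\Lambda\coloneqq \Lambda_1\times\Z$ (normal of index $2$ in $\piun{M_0}$) I would define $F\colon M\coloneqq \Lambda\backslash E\to M_0$ as the associated Galois double cover.

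Claim (1) then follows directly from classical covering space theory: since $\piun{U_0}\subset \Lambda$, the preimage $F^{-1}(U_0)$ splits into two disjoint connected components $U\sqcup U'$, each mapped diffeomorphically onto $U_0$ by $F$; I pick either, say $U$. For claim (2), I would rely on the fact that $\delta_M$ coincides with $\delta_{M_0}=\pi_\Omega\colon E\to\Omega$ (since developing maps of flag covers agree) and descends by $\Lambda$-equivariance to the covering $F$. The key computation is that for every $x\in\Omega$, the image $\pi_M(\delta_M^{-1}(x))$ is the \emph{entire} two-element fiber $F^{-1}(\pi_\Gamma(x))=\{y_1,y_2\}$: given any lift $\tilde y\in E$ of $y_i$, the element $(1,-n)\in \Lambda$ (for a suitable $n$) has holonomy $g^{-n}$ and translates $\delta_M(\tilde y)$ back to $x$, the crucial point being that the $\Z$-factor of $\Lambda$ still surjects via $\rho_{M_0}$ onto the full holonomy group $\Gamma=\langle g\rangle$. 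Claim (2) then splits into two cases: if $\pi_\Gamma(x)\notin U_0$, both $y_i$ lie outside $F^{-1}(U_0)\supset U$, so every preimage of $x$ lies in $E\setminus\pi_M^{-1}(U)$; if $\pi_\Gamma(x)\in U_0$, then exactly one of $y_1,y_2$ lies in $U$ and the other in $U'$, so some preimage of $x$ projects to $U'$ under $\pi_M$ and hence lies in $E\setminus\pi_M^{-1}(U)$.

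The main obstacle is the first step: producing the index-$2$ subgroup $\Lambda_1\subset\piun{\Sigma}$ containing $\piun{U_0}$. This reduces to comparing the rank $2$ of the free group $\piun{\mathcal{U}_0}$ with the rank $4$ of $H^1(\Sigma;\Z/2)$, observing that a proper subspace admits a non-zero linear form vanishing on it. The remaining verifications of (1) and (2) are then routine consequences of covering theory and the equivariance relation $F\circ\pi_M = \pi_\Gamma\circ\delta_M$, which controls exactly how many fibers of $F$ are hit by $\delta_M^{-1}(x)$.
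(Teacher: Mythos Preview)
Your proof is correct and follows the same overall strategy as the paper: build a $\Z/2$-cover of $M_0$ coming from an index-two subgroup of $\piun{\Sigma}$ that contains the image of $\piun{U_0}$, so that $U_0$ lifts to two disjoint copies $U,U'$, and then use the deck involution (which lies in $\piun{\Sigma}$, hence has trivial holonomy) to transport any point of $\pi_M^{-1}(U)$ to $\pi_M^{-1}(U')$ without changing its $\delta_M$-image.

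The difference is in how the index-two subgroup is produced. The paper first homotopes the bouquet $B$ into the surface $\Sigma$ and argues that it can be identified with two curves $a_1,a_2$ of a standard generating system, after which the explicit homomorphism $a_i\mapsto 0$, $b_i\mapsto 1$ does the job. You bypass this geometric step entirely with a clean dimension count: the image of the rank-two free group $\piun{\mathcal U_0}$ in $H_1(\Sigma;\Z/2)\simeq(\Z/2)^4$ spans at most a two-dimensional subspace, so some nonzero linear form kills it. This is more robust, since it does not require knowing that the two circles of $B$ become part of a standard basis after homotopy. Your argument for (2), phrased via the surjectivity of $\pi_M$ restricted to $\delta_M^{-1}(x)$ onto the fibre $F^{-1}(\pi_\Gamma(x))$, is equivalent to the paper's use of the relation $\delta_M(\gamma x)=\delta_M(x)$ for $\gamma\in\piun{\Sigma}$; the detour through the $\Z$-factor is unnecessary but harmless.
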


We will construct this covering from a suitable covering $\Sigma_3$
of $\Sigma$ by a genus 3 surface,
and to this end we first have to homotope
the bouquet of circles $B$ to $\Sigma$.

\subsubsection{Homotopy of $B$ to $\Sigma$}
In
 \eqref{equation-definitionDdomainefondamental},
we described a fundamental domain $D$ for the action of a
loxodromic element $g$ on the flag space $\X$.
This fundamental domain is naturally identified with
a product $\Sigma\times (0,1)$,
with $\Sigma$ a genus two surface
which is the boundary of
a tubular neighbourhood
of the repelling bouquet $B^-$ of $g$.
The quotient space $M_0=\Gamma_0\backslash\Omega$ is a flag manifold
homeomorphic to $\Sigma\times\Sn{1}$
with $\Sigma$ a genus 2 closed connected and orientable surface.

An explicit tubular neighbourhood of a bouquet can be constructed as follows.
Consider the two $\PGL{3}$-equivariant fiber-bundle projections of $\X$, $\pi_\alpha$ and $\pi_\beta$,
which are the  first and second coordinate projections
onto $\RP{2}$ and $\RP{2}_*$.   The 
\emph{$\alpha-\beta$ bouquet}
passing through $x$, $\Balphabeta(x)=\Calpha(x)\cup\Cbeta(x)$, is the union of the two fibers $\Calpha(x)= \pi^{-1}_\alpha(\pi_\alpha(x))$ and 
$\Cbeta(x)= \pi^{-1}_\beta(\pi_\beta(x))$.  

A convenient description of a tubular neighbourhood of $\Balphabeta(x)$ is given as the union of fibered neighbourhoods
of each of the  circles of the bouquet.
Let $U_{\alpha}$ and  $U_{\beta}$ be two neighbourhoods of $\pi_\alpha(x)$ and
$\pi_\beta(x)$ respectively, which we suppose to be homeomorphic to discs. The fibrations are trivial over each neighbourhood and, therefore, $\pi^{-1}_\alpha(U_{\alpha})$ and
$\pi^{-1}_\beta(U_\beta)$ are tubular neighbourhoods of the two circles forming the bouquet.  Each neighbourhood is a fibered full torus.

\begin{lemma}
Let $\Balphabeta(x)$ be the bouquet of circles associated to the flag $x$.    Then, for any neighbourhoods $U_{\alpha}$ and  $U_{\beta}$  in $\RP{2}$ and $\RP{2}_*$ homeomorphic to discs as above, $\pi^{-1}_\alpha(U_{\alpha})\cup \pi^{-1}_\beta(U_\beta)$ is a tubular neighbourhood of $\Balphabeta(x)$.
\end{lemma}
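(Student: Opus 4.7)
The strategy is to verify that
$V \coloneqq \pi_\alpha^{-1}(U_\alpha)\cup\pi_\beta^{-1}(U_\beta)$
is an open neighbourhood of $\Balphabeta(x)$ which deformation retracts onto the bouquet, and whose closure is diffeomorphic to a genus-two handlebody: this is the working meaning of ``tubular neighbourhood'' of the one-dimensional subcomplex $\Balphabeta(x)$ of the three-manifold $\X$. Openness and the inclusion $\Balphabeta(x) \subset V$ are immediate from continuity of $\pi_\alpha,\pi_\beta$ together with $\pi_\alpha(\Calpha(x))=\{\pi_\alpha(x)\}\in U_\alpha$ and its $\beta$-analogue.

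Next, since $\pi_\alpha$ is a locally trivial circle fibration and $U_\alpha$ is contractible, $\pi_\alpha$ trivializes over $U_\alpha$, yielding a diffeomorphism $V_\alpha\coloneqq\pi_\alpha^{-1}(U_\alpha)\cong U_\alpha\times\Sn{1}$ exhibiting $V_\alpha$ as a solid-torus regular neighbourhood of its core $\Calpha(x)=\pi_\alpha^{-1}(\pi_\alpha(x))$, equipped with a canonical deformation retraction $r_\alpha$ coming from the contraction of $U_\alpha$. Symmetrically $V_\beta\coloneqq\pi_\beta^{-1}(U_\beta)$ is a solid-torus regular neighbourhood of $\Cbeta(x)$, with retraction $r_\beta$. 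To see how these two tori assemble I would work in a local affine chart of $\X$ about $x$: the $\alpha$- and $\beta$-foliations are transverse at $x$ (their tangent lines span the contact plane in $T_x\X$), so $V_\alpha$ and $V_\beta$ look locally like two transverse thickenings of two transverse arcs through $x$, whose union near $x$ is an open three-ball containing a wedge of two sub-arcs.

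Globally, $V$ is thus two open solid tori glued along an open neighbourhood of $x$ that is a three-ball meeting each torus in a ball containing a sub-arc of its core; this is the standard model of an open regular neighbourhood of the wedge $\Calpha(x)\cup\Cbeta(x)$ in a three-manifold, hence is diffeomorphic to an open genus-two handlebody with spine $\Balphabeta(x)$. The deformation retraction of $V$ onto that spine is then obtained by interpolating $r_\alpha$ and $r_\beta$ through a bump function supported in a small neighbourhood of $x$ inside $V_\alpha\cap V_\beta$. The main technical obstacle I anticipate is the possible complexity of the overlap $V_\alpha\cap V_\beta$ when $U_\alpha,U_\beta$ are large discs, since the fibre of $\pi_\alpha$ over a point $p$ far from $\pi_\alpha(x)$ may meet $U_\beta$ in several disjoint arcs, a priori disconnecting the overlap; this is handled either by a preliminary reduction to small discs inside $U_\alpha,U_\beta$ (which suffices for the existence of \emph{some} tubular neighbourhood around $\Balphabeta(x)$), or by exploiting the two fibration structures on $V$ to first retract every point into a small neighbourhood of the bouquet before invoking the local analysis above.
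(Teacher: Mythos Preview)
Your approach is essentially the same as the paper's: identify each $V_\alpha,V_\beta$ as a solid torus (trivial circle bundle over a disc) and then argue that their overlap is a single three-ball, so the union is a genus-two handlebody with spine $\Balphabeta(x)$. The paper's proof is considerably terser than your outline: after noting the two solid tori, it passes to an explicit affine chart in which $U_\alpha$ is a disc about the origin, the flag $x$ is the origin with the $x$-axis, and $U_\beta$ consists of the lines of slope in $(-\epsilon,\epsilon)$ meeting $U_\alpha$; in these coordinates the intersection $V_\alpha\cap V_\beta$ is visibly parametrized by $U_\alpha\times(-\epsilon,\epsilon)$, hence a ball. No separate construction of a deformation retraction is given.

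Your flagged obstacle about the overlap for \emph{arbitrary} large discs is legitimate: for a general topological disc $U_\beta\subset\RP{2}_*$, the pencil of lines through a point $p\in U_\alpha$ (a projective line in $\RP{2}_*$) may meet $U_\beta$ in several arcs, so $V_\alpha\cap V_\beta$ need not be a single ball. The paper does not address this either --- its explicit computation only treats the special small-$\epsilon$ case above --- but for the sole application (the subsequent sentence, that any two bouquets $\Balphabeta(x_i)$ with $\pi_\alpha(x_i)\in U_\alpha$, $\pi_\beta(x_i)\in U_\beta$ are homotopic) the existence of \emph{some} disc neighbourhoods with this property is all that is needed. Your proposed fix of shrinking to small discs is exactly the right move and matches what the paper implicitly does.
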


\begin{proof}
Clearly, $\pi^{-1}_\alpha(U_{\alpha})$ and 
$\pi^{-1}_\beta(U_\beta)$ are tubular neighbourhoods of the two circles forming the bouquet.  We note
that the intersection of these two neighbourhoods is homeomorphic to a ball in a special case.
Consider an affine chart containing $U_{\alpha}$,
which we can assume to be a disc centered at the origin.
The flag $x$ can be identified to the pair consisting of the origin and the $x$-axis.
The neighbourhood $U_\beta$ can be chosen to be formed of parallel lines of slopes ranging,
for small $\epsilon$,
from $-\epsilon$ to $\epsilon$, and  intersecting the disc $U_\alpha$.
The intersection $\pi^{-1}_\alpha(U_{\alpha})\cap \pi^{-1}_\beta(U_\beta)$
is then a cylinder parametrized by $U_\alpha\times (-\epsilon,\epsilon)$.
\end{proof}

From this lemma we conclude that any two bouquets $\Balphabeta(x_i)$, $i=1,2$,
with $\pi_\alpha(x_i)\in U_{\alpha}$ and $\pi_\beta(x_i)\in U_\beta$ are homotopic.
This implies that one can choose
a bouquet $B\subset \Sigma\times (0,1)$ contained in the fundamental domain
and it will be homotopic to a bouquet contained in the surface $\Sigma$.

The goal now is to construct a double cover of the quotient  space
$M_0=\Gamma_0\backslash\Omega=\Sigma_2\times\Sn{1}$
such that the bouquet $B$ lifts to two bouquets,
each of them homeomorphic to the original one by the covering map.  It is sufficient to
construct a double cover of the surface with the same property as $B$
can be deformed to the surface $\Sigma$.

\subsubsection{Suitable covering of $\Sigma$}
Let  $\Sigma$ be a genus two closed connected and orientable surface
and $a_1,b_1, a_2,b_2$ be standard generators of $\pi_1(\Sigma)$.
The bouquet $B$ can be identified, through a homotopy, to the element $a_1\cup a_2$ in $\Sigma$.
We need now the following :

\begin{lemma}\label{lemma-coveringsurface}
Let  $\Sigma$ be a genus two surface and $a_1,b_1, a_2,b_2\subset \Sigma $ be standard generators of $\pi_1(\Sigma)$.
Then, there exists a Galoisian double cover by a genus three surface $\pi : \Sigma_3\to \Sigma$,
such that the inverse image of  $a_1 a_2$ has two connected components.  In other words, the covering exact sequence
$$
\{e\}\to \pi_1(\Sigma_3)\to \pi_1(\Sigma)\to \Z/2\Z\to \{0\}
$$
satisfies $a_1,a_2\in \ker \phi$, where $\phi : \pi_1(\Sigma)\to \Z/2\Z$ is the quotient map.

\end{lemma}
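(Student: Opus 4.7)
The plan is to construct $\Sigma_3$ as the connected cover of $\Sigma$ associated to an explicit surjection $\phi\colon\piun{\Sigma}\to\Z/2\Z$ whose kernel contains both $a_1$ and $a_2$. Since $\Z/2\Z$ is abelian, any morphism $\piun{\Sigma}\to\Z/2\Z$ factors through the abelianization $H_1(\Sigma,\Z)\simeq\Z^4$, and the surface relation $[a_1,b_1][a_2,b_2]=\id$ is automatically sent to zero. I can therefore freely prescribe the values of $\phi$ on the generators. A convenient choice is $\phi(a_1)=\phi(a_2)=0$ and $\phi(b_1)=1$, $\phi(b_2)=0$, which gives a well-defined surjection with $a_1,a_2\in\ker\phi$.

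Next, I would take $\pi\colon\Sigma_3\to\Sigma$ to be the connected cover associated to the subgroup $\ker\phi\subset\piun{\Sigma}$. Since this subgroup is of index two it is normal, so $\pi$ is automatically Galoisian with deck group $\Z/2\Z$, giving the desired short exact sequence. The cover is unramified of degree two, hence $\chi(\Sigma_3)=2\chi(\Sigma)=-4$; as $\Sigma_3$ is closed and orientable (being a cover of a closed orientable surface), this forces it to be the genus three surface.

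Finally, I would check that the preimage of the bouquet $a_1a_2$ (i.e.\ of the wedge $a_1\cup a_2$ based at the common basepoint $x_0$) has exactly two connected components. Since $a_1,a_2\in\ker\phi$, each of these loops lifts through $\pi$ to two disjoint closed loops; the basepoint $x_0$ itself has two preimages $\tilde{x}_0^{(1)}$ and $\tilde{x}_0^{(2)}$, exchanged by the deck involution. The preimage of $a_1\cup a_2$ therefore consists of two copies of the wedge, one attached at each $\tilde{x}_0^{(i)}$, which gives precisely two connected components.

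I do not expect any genuine obstacle: the only point that requires a moment's care is that, because $\Z/2\Z$ is abelian, the surface relation imposes no constraint on $\phi$, so the freedom to send $a_1,a_2$ to zero while keeping $\phi$ surjective is automatic. The genus computation via Euler characteristic and the counting of components of the preimage are then standard covering-space theory.
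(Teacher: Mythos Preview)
Your proof is correct and follows essentially the same approach as the paper: both construct the cover as the kernel of an explicit surjection $\phi\colon\piun{\Sigma}\to\Z/2\Z$ sending $a_1,a_2\mapsto 0$ (the paper sends both $b_1,b_2\mapsto 1$ rather than your $b_1\mapsto 1$, $b_2\mapsto 0$, but either choice works). Your write-up is in fact more detailed than the paper's one-line proof, spelling out the well-definedness, the genus computation via Euler characteristic, and the component count.
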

\begin{proof}
 This lemma is straightforward once we choose $\pi_1(\Sigma_3)$ to be the kernel of the
homomorphism $\pi_1(\Sigma)\to \Z/2\Z$ given by $a_1\to 0,a_2\to 0,b_1\to 1,b_2\to 1$.
\end{proof}

\subsubsection{Proof of Lemma \ref{lemma-revetementM}} 

We can now prove Lemma \ref{lemma-revetementM}.
Consider the double cover $\pi : \Sigma_3\to \Sigma$ of Lemma
\ref{lemma-coveringsurface}.
Recall that $M_0\simeq\Sigma\times\Sn{1}$ and define the double cover $M=\Sigma_3\times\Sn{1}$
$$
F : M\to M_0
$$
induced by the cover $\pi$.  Let $U_0$ be a tubular neighbourhood of the bouquet $B$ (which, by homotopy,  we may consider to be the
union of the two generators $a_1$ and $a_2$ in $\Sigma$).  By  lemma
\ref{lemma-coveringsurface}, $F^{-1}(U_0)$ has two connected components $U$ and $U'$, each of them homeomorphic to $U_0$ through $F$.  

Consider now the universal covering 
$\pi_{M}\colon E\to M$
and the developing map $\delta_{M}:E\to \Omega$.
The latter coincides
with $\delta_{M_0}\colon E\to \Omega$ since $M$ is a cover of $M_0$,
and satisfies thus $\delta_M(\gamma x)=\rho_{M_0}(\gamma)\delta_M(x)$
for any $\gamma\in \pi_1(M_0)$ and $x\in E$.
Since
$\rho_{M_0}$ restricted to $\pi_1(\Sigma)$ is trivial
according to \eqref{equation-rhoM0}, we have thus
\begin{equation}\label{equation-relationpi1Sigma}
\delta_M(\gamma x)=\delta_M(x)
\end{equation}
for all $\gamma\in \pi_1(\Sigma)$. Choose one of the connected components $U\subset F^{-1}(U_0)$. 
Observe that $\pi_{M}$ restricts to a covering map $E\setminus \pi_{M}^{-1}(U)\to M\setminus U$  and the developing map defined on 
the universal cover of ${M\setminus U}$
descends to a map defined on $E\setminus \pi_{M}^{-1}(U)$.
Moreover if $x\in \pi_{M}^{-1}(U)$,
then since the covering $F : M\to M_0$ is Galois
there exists an element $\gamma\in \pi_1(\Sigma)$
such that
$\gamma x\in \pi_{M}^{-1}(U')\subset E\setminus \pi_{M}^{-1}(U)$,
hence
$\delta(x)=\delta(\gamma x)\in\delta_M(E\setminus\pi_{M}^{-1}(U))$
according to \eqref{equation-relationpi1Sigma}.
We obtain thus that
$\delta_M(E\setminus\pi_{M}^{-1}(U))=\delta_{M}(E)=\Omega$,
which concludes the proof of the lemma.

\subsection{Conclusion of the proof of
Theorem \ref{theoremintro-structuresnonKleiniennesparsommeconnexe}}
\label{subsection-proofnonKleiniangluings}
Let $N$ be a  Kleinian flag manifold
such that \begin{enumerate}
 \item there exists
 $B_N\subset N$ an $\alpha-\beta$ bouquet
 of two circles admitting a neighbourhood $U_N$
flag isomorphic to the neighbourhood of an $\alpha-\beta$
bouquet of circles in $\X$;
 \item denoting $\Gamma_N\subset\PGL{3}$ the holonomy group of $N$,
 there exists a loxodromic element $h\in \Gamma_N$.
\end{enumerate}
We now use the open set $U_M\coloneqq U\subset M$
given by Lemma \ref{lemma-revetementM},
which is a neighbourhood of the $\alpha-\beta$ bouquet of circles $B_M$,
and the neighbourhood $U_N\subset N$,
to form the flag surgery $S$ of $M$ and $N$
along $U_M$ and $U_N$
(see the proof of Theorem \ref{theoremintro-sommeconnexe}
in subsection \ref{subsection-preuvetheoremconnectedsum}).
Let $\delta_S\colon\tilde{S}\to\X$
and $\rho_S\colon\piun{S}\to\PGL{3}$
be the developing map and holonomy morphism of $S$,
and $\Gamma_S=\rho_S(\piun{S})$ its holonomy group.  Recall that $E$ is the universal cover of $M$.

\begin{lemma}\label{lemma-imagedeveloppantesommeconnexe}
Possibly composing $\delta_S$ with an element of $\PGL{3}$,
we have
  $\delta_M(E\setminus\pi_M^{-1}(U))\subset\delta_S(\tilde{S})$.
\end{lemma}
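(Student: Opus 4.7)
The plan is to exploit two facts from the surgery construction in paragraph \ref{subsection-preuvetheoremconnectedsum}: the natural inclusion $j_M$ is a flag structure embedding of $M \setminus \phi_M^{-1}(H_1)$ into $S$, and the removed handlebody $\phi_M^{-1}(H_1)$ sits inside $U = U_M$ (because $H_1 \subset H_2 \subset U$ by Lemma \ref{lemma-echangecorpsenanses} and the choices made in the surgery).

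Combining these two observations immediately gives the inclusion $E \setminus \pi_M^{-1}(U) \subset \pi_M^{-1}(V)$, where $V := M \setminus \phi_M^{-1}(H_1)$ is flag-embedded into $S$ via $j_M$. The goal thereby reduces to checking that, for each point $y$ of this preimage, $\delta_M(y)$ lies in $\delta_S(\tilde{S})$.

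To compare $\delta_M$ and $\delta_S$, I would lift the flag embedding $j_M|_V$ to the universal cover level. Fix a basepoint $y_0 \in E \setminus \pi_M^{-1}(U)$, its image $x_0 = \pi_M(y_0) \in V$, and a lift $\tilde{z}_0 \in \tilde{S}$ of $j_M(x_0)$. On a simply connected neighborhood $W \subset E$ of $y_0$ contained in $\pi_M^{-1}(V)$, the continuous map $j_M \circ \pi_M$ lifts uniquely to a flag local isomorphism $\tilde{j}_M \colon W \to \tilde{S}$ sending $y_0$ to $\tilde{z}_0$. The Liouville-type rigidity underlying Proposition \ref{proposition-Liouville} then forces $\delta_S \circ \tilde{j}_M$ and $\delta_M$ to differ on $W$ by a single element of $\PGL{3}$. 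Since developing maps are only defined up to post-composition by $\PGL{3}$, we absorb this element into the choice of $\delta_S$, obtaining $\delta_S \circ \tilde{j}_M = \delta_M$ on $W$, and hence $\delta_M(y_0) = \delta_S(\tilde{z}_0) \in \delta_S(\tilde{S})$.

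The main obstacle will be extending this pointwise statement to all of $E \setminus \pi_M^{-1}(U)$. A single global lift of $j_M \circ \pi_M$ defined on $\pi_M^{-1}(V)$ would require its induced map on $\pi_1$ to annihilate $\pi_1(\pi_M^{-1}(V))$, which need not hold a priori. My workaround is to repeat the local construction above at each $y \in E \setminus \pi_M^{-1}(U)$ and argue consistency: any two local lifts of $j_M \circ \pi_M$ differ by a deck transformation of $\tilde{S} \to S$, whose image under $\delta_S$-equivariance lies in $\rho_S(\piun{S})$ and therefore preserves $\delta_S(\tilde{S})$. Consequently, the local identification $\delta_M(y) = \delta_S(\tilde{j}_M(y))$ obtained at each point $y$ always places $\delta_M(y)$ inside $\delta_S(\tilde{S})$, yielding the required inclusion.
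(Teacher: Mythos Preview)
Your argument is correct and takes a more direct route than the paper's. The paper builds an explicit intermediate cover $Y$ of $S$ as a tree-like gluing of copies of $E\setminus\pi_M^{-1}(U_M)$ and $\tilde{N}\setminus\pi_N^{-1}(U_N)$ (mimicking the construction in the proof of Theorem~\ref{theorem-Kleinian-connectedsum}); since one of the building blocks is literally $E\setminus\pi_M^{-1}(U_M)$, the developing map of $Y$---which is $\delta_S$ on $\tilde Y=\tilde S$---restricts on that block to $\delta_M$, yielding the inclusion. You instead bypass the tree entirely and work directly with the flag embedding $j_M\colon V=M\setminus\phi_M^{-1}(H_1)\hookrightarrow S$, lifting $j_M\circ\pi_M$ locally to $\tilde{S}$ and invoking Liouville-type rigidity. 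This is more elementary and avoids the auxiliary cover; the paper's route, on the other hand, makes the relation to the Kleinian gluing transparent and is reused verbatim in the paragraph following the lemma.

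One point in your step~4 deserves to be made explicit. The deck-transformation observation only tells you that the class of $g_y$ in $\rho_S(\pi_1(S))\backslash\PGL{3}$ is \emph{locally} constant in $y$ (two local lifts with overlapping domains differ by a deck transformation, hence the associated $g$'s differ by an element of $\rho_S(\pi_1(S))$). To conclude from $g_{y_0}=\id$ that $g_y\in\rho_S(\pi_1(S))$ for \emph{every} $y$, and hence that $\delta_M(y)=g_y^{-1}\delta_S(\tilde j_M(y))\in\delta_S(\tilde S)$, you need $\pi_M^{-1}(V)$ to be connected. Equivalently you need $\pi_1(V)\to\pi_1(M)$ to be surjective, which holds here because $V$ is the complement of a genus-two handlebody: any loop in $M$ can be pushed off the one-dimensional spine of the handlebody by general position. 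Once you state this, your ``consistency'' argument is complete.
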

\begin{proof}
According to the Definition \ref{definition-connectedsum}
of a flag surgery, there exists a flag embedding
$j\colon M\setminus U\to S$.
More precisely,
the proof of Theorem \ref{theorem-Kleinian-connectedsum} shows that
one
can obtain a cover $Y$ of $S$ by taking the tree formed by
the components $E\setminus\pi_M^{-1}({U_M})$
and $\tilde{N}\setminus\pi_N^{-1}({U_N})$ glued
accordingly. Note that contrary to the case of
Theorem \ref{theorem-Kleinian-connectedsum},
$Y$ is not embedded in $\X$,
however the developing map $\delta\colon\tilde{S}\to\X$ descends
to a flag morphism $\bar{\delta}_S$ from $Y$ to $\X$.
In particular, we can choose for $\delta_S$ the unique developing map
of $S$ such that
$\bar{\delta}_S\restreinta_{E\setminus\pi_M^{-1}({U_M})}$
coincides with $\delta_M\restreinta_{E\setminus\pi_M^{-1}({U_M})}$,
so that
$\delta_M(E\setminus\pi_M^{-1}(\bar{U}))\subset\delta_S(\tilde{S})$
as claimed.
\end{proof}

According to Lemmas \ref{lemma-revetementM} and \ref{lemma-imagedeveloppantesommeconnexe},
$\delta_S(\tilde{S})$ contains $\Omega$.  We denote by $\Omega_N\subset \X$ the domain of discontinuity 
of the holonomy $\Gamma_N$. 
\par Now, following the proof of Theorem \ref{theorem-Kleinian-connectedsum}
we construct a tree-like manifold $Y$ which is a cover of $S$
by attaching $E$ 
(using involutions as described in subsection \ref{subsection-definitionexemples})
along each component $ \partial \gamma\pi_M^{-1}{U}_{M}$, with  $\gamma\in \Gamma$, to 
$\Omega_N\setminus  \bigcup_{\gamma\in \Gamma_N}(\gamma{U'_{N}})$ (here ${U'_{N}}\subset \Omega_N$ is a lift of $U_N$).  We repeat the attaching procedure recursively as in Theorem \ref{theorem-Kleinian-connectedsum}.
  Remark that in this case, contrary to the surgery of two Kleinian structures,
  the manifold $Y$ is not embedded into $\X$.  But the developing map defined on $\tilde {S}$,
  descends to a map defined on $Y$. The holonomy group contains the group $\Gamma$
  and a subgroup isomorphic to $\Gamma_N$ as in the proof of Theorem \ref{theorem-Kleinian-connectedsum}. This can be checked by observing that the developing map defined on $Y$ is equivariant with respect to the action of these groups.

By construction, the neighbourhood $ \pi_M^{-1}{U}_{M}\subset \X$
contains a bouquet in the limit set of the
holonomy of the surgery.
In fact, this is the case for each $\gamma {U'_{N}}$
with $\gamma\in \Gamma$.
Indeed, $\X\setminus {U'_{N}}$ contains a bouquet in the limit set
of the holonomy of $N$ and by the appropriate inversion used in the surgery,
it will be translated to inside the neighborhood $\gamma\pi_M^{-1}{U}_{M}$.
The conclusion follows from Lemma \ref{lemma-KulkarniPinkall}.

\bibliographystyle{alpha}
\bibliography{biblio-varietesdrapeauxnonKleiniennes}

\end{document}